\setlist[enumerate]{leftmargin=*}
\numberwithin{equation}{section}   %%numera le equazioni sezione per sezione
\newtheorem{theorem}{Theorem}[section]
\newtheorem{lemma}[theorem]{Lemma}
\newtheorem{proposition}[theorem]{Proposition}
\newtheorem{corollary}[theorem]{Corollary}
\theoremstyle{definition}
\newcommand{\spnt}[2]{\left\lfloor #1, #2 \right\rceil}
\newcommand{\tc}{\,:\,}
\newcommand{\RR}{\mathbb{R}}
\newcommand{\NN}{\mathbb{N}}
\newcommand{\DimD}{d}
\newcommand{\DimK}{k}                 % dimension of the equatorial sphere
\newcommand{\defeq}{\mathrel{:=}}
\newcommand{\Harm}{\mathcal{H}}
\newcommand{\sfera}{\mathbb{S}}   % sphere
\newcommand{\group}[1]{\mathrm{#1}}  
\newcommand{\meas}{\sigma}        % measure on sphere
\newcommand{\bigO}{\mathcal{O}}   % Landau notation
\newcommand{\LegF}{\mathrm{P}}     % associated Legendre (Ferrers) function
\newcommand{\tX}{X}      % modified spherical harmonic S^d 
\def\author@andify{%
  \nxandlist {\unskip ,\penalty-1 \space\ignorespaces}%
    {\unskip {} \@@and~}%
    {\unskip \penalty-2 \space \@@and~}%
}
\begin{document}

\title[Estimates for ultraspherical polynomials]{Uniform pointwise estimates \\ for ultraspherical polynomials}
\author{Valentina Casarino}
\address{Universit\`a degli Studi di Padova\\Stradella san Nicola 3 \\I-36100 Vicenza \\ Italy}
\email{valentina.casarino@unipd.it}
\author{Paolo Ciatti}
\address{Universit\`a degli Studi di Padova\\Via Marzolo 9 \\I-35100 Padova \\ Italy}
\email{paolo.ciatti@unipd.it}
\author{Alessio Martini}
\address{School of Mathematics \\ University of Birmingham \\ Edgbaston \\ Birmingham \\ B15 2TT \\ United Kingdom}
\email{a.martini@bham.ac.uk}

\begin{abstract}
We prove pointwise bounds for two-parameter families of Jacobi polynomials.
Our bounds 
imply estimates 
for a class of functions   
 arising from the spectral analysis of distinguished Laplacians and sub-Laplacians on the unit sphere in arbitrary dimension, and are instrumental in the proof of sharp multiplier theorems for those operators.
\end{abstract}
\keywords{Jacobi polynomials, ultraspherical polynomials,  associated Legendre functions, 
   hyperspherical harmonics}
\subjclass[2010]{33C45, % 	Orthogonal polynomials and functions of hypergeometric type; 
33C55 (primary); %spherical harmonics
42C05, %%%Orthogonal functions and polynomials, general theory of nontrigonometric harmonic analysis
58J50 (secondary)%%%: Spectral problems; spectral geometry; scattering theory
 }

\thanks{
The first and the second author were partially supported by GNAMPA (Project 2018
``Operatori e disuguaglianze integrali in spazi con simmetrie")
and MIUR (PRIN 2016 ``Real and Complex Manifolds: Geometry, Topology and Harmonic Analysis").
Part of this research was carried out while the third author was visiting the Dicea, Universit\`a di Padova,  Italy,  as a recipient of a ``Visiting Scientist 2019'' grant;
he gratefully thanks the Universit\`a di Padova for the support and hospitality. 
The authors are members of the Gruppo Nazionale per l'Analisi Matematica, la Probabilit\`a e le loro Applicazioni (GNAMPA) of the Istituto Nazionale di Alta Matematica (INdAM)}

\maketitle

\section{Introduction}\label{s:intro}
The primary purpose of this work is to prove  pointwise  estimates for a family of functions that
 are fundamentally related to the spectral analysis  of spherical Laplacians and sub-Laplacians and expressed in terms of ultraspherical polynomials.
More specifically, for a fixed  $\DimD \in \NN$, $\DimD\geq 2$, 
we consider  the functions
\begin{equation}\label{eq:Xtilde}
\tX_{\ell, m}^\DimD (x)
= c_{\ell m} (1-x^2)^{m/2-(\DimD-2)/4} P_{\ell-m-1/2}^{(m,m)}(x).
\end{equation}
Here 
$\ell\in \NN_d \defeq \NN + (d-1)/2$, $m\in  \NN_{d-1}$,
 $m\leq \ell$,   
$x \in [-1,1]$,
 the symbol $P^{(\alpha,\beta)}_j$  denotes the Jacobi polynomial of degree $j \in \NN$ and indices $\alpha, \beta>-1$, and
$c_{\ell m}$ 
is the normalization constant given by
\begin{equation}\label{eq:clm}
c_{\ell m} = \frac{\bigl[\ell \, \Gamma ( \ell-m+1/2)\, \Gamma (\ell +m+1/2)\bigr]^{1/2}}{2^m \,\Gamma( \ell+1/2)}
\end{equation}
and chosen so that 
\begin{equation}\label{eq:Xlm_def_iniziale-norm}
\int_{-1}^1
|\tX_{\ell, m}^\DimD (x)|^2 \,(1-x^2)^{(\DimD-2)/2} \,dx = 1,
\end{equation}
see \cite[(4.3.3)]{Szego}.

The functions $\tX_{\ell, m}^\DimD$ are instrumental in the recursive construction of orthonormal bases of 
$L^2(\sfera^\DimD)$, $\sfera^\DimD$ denoting the unit sphere in $\RR^{1+\DimD}$, made of spherical harmonics. Namely, for all $k \geq 1$ and $m \in \NN_k$,
let $\Harm^m(\sfera^\DimK)$ denote the space of spherical harmonics (that is, restrictions to the spherical surface of harmonic polynomials) of degree $m-(\DimK-1)/2$ on the unit sphere in $\RR^{1+\DimK}$. Moreover, for all functions $f$ on $\sfera^{\DimD-1}$, let us define the function $\tX^{\DimD}_{\ell,m} \otimes f$ on $\sfera^d$ by
\[
(\tX^{\DimD}_{\ell,m} \otimes f) ((\cos \psi) \omega, \sin \psi) = \tX^{\DimD}_{\ell,m}(\sin \psi) f(\omega),
\]
for all $\omega \in \sfera^{\DimD-1}$ and $\psi \in [-\pi/2,\pi/2]$ (this definition makes sense almost everywhere on $\sfera^d$; actually, when $m>(d-2)/2$, it makes sense everywhere, because $\tX^{\DimD}_{\ell,m}(\pm 1) = 0$ in that case). Then, for all 
$\ell \in \NN_d$
and 
$m \in \NN_{d-1}$
such that  $m\leq \ell$, the map $f \mapsto \tX^{\DimD}_{\ell,m} \otimes f$ is an isometric embedding of $\Harm^m(\sfera^{\DimD-1})$ into $\Harm^\ell(\sfera^{\DimD})$ (with respect to the Hilbert space structures induced by $L^2(\sfera^{\DimD-1})$ and $L^2(\sfera^\DimD)$ respectively), and indeed we have the orthogonal direct sum decomposition
\begin{equation}\label{eq:decomp-Hl}
\Harm^\ell(\sfera^{\DimD}) = \bigoplus_{m \leq \ell} \tX^{\DimD}_{\ell,m} \otimes \Harm^m(\sfera^{\DimD-1}).
\end{equation}
This construction is classical and can be found in several places in the literature, modulo some minor notational differences
 (see, e.g., \cite[Ch.\ IX]{Vilenkin} or \cite[Chapter XI]{EMOT}).

In order to obtain pointwise estimates for $\tX_{\ell, m}^\DimD (x)$, it is natural to seek bounds for
 the ($\DimD$-independent) functions 
\[
Y_{\ell,m}(x) = c_{\ell,m} (1-x^2)^{m/2} \, P_{\ell-m-1/2}^{(m,m)}(x),
\]
  with $ (\ell,m) \in (\NN/2)^2$ and $ \ell-m-1/2 \in \NN$.
Upper bounds for Jacobi polynomials $P^{(\alpha,\beta)}_j$, that are uniform with respect to $\alpha$, $\beta$ and $j$ in suitable ranges, have recently attracted a considerable interest. 
For a brief account of these bounds, with particular emphasis on  Bernstein-type inequalities,  we refer to  \cite{EMN};
for some earlier results on ultraspherical polynomials and the strictly related  
associated Legendre functions,
see  \cite{Lo1, Lo2}.
For recent contributions, focusing on  the uniformity with respect to the indices,
 we refer to  works of  Haagerup and Schlichtkrull  \cite{Haagerup}, Koornwinder, Kostenko and Teschl \cite{KKT}, and Krasikov \cite{Kra2}. 
In the particular case $d=2$, some relevant upper bounds for the classical spherical harmonics
 may be found in \cite{Ward, BDWZ, Frank}.

Most of the aforementioned results give uniform weighted estimates for suitably normalised families of Jacobi polynomials $P_{j}^{(\alpha,\beta)}$, where the weight depends on the type $(\alpha,\beta)$ and is independent of the degree $j$. In contrast, the estimates that we obtain here take into consideration, for each individual function $Y_{\ell,m}$, the position of the ``transition points'' $\pm a_{\ell,m}$ (see \eqref{eq:def-aldm} below) that separate the regions of oscillation and decay of $Y_{\ell,m}$ on $[-1,1]$. Estimates of this nature, that describe with a certain precision the behaviour of the function near the transition points, turn out to be essential ingredients in the proof of a sharp spectral multiplier theorem for Grushin operators on the unit sphere $\sfera^\DimD$, whose spectral decomposition can be expressed in terms of spherical harmonics. In the case $\DimD=2$, this problem was studied in \cite{CaCiaMa}, where pointwise estimates of this type were proved for the functions $\tX_{\ell,m}^2$. The present paper confirms the validity of similar estimates for the functions $\tX_{\ell,m}^\DimD$ with arbitrary $\DimD \geq 2$; details on their application to the proof of a multiplier theorem are given in \cite{CaCiaMa2}. We also refer to \cite[Section 8]{HoMa} for the discussion of estimates of this kind for a different family of Jacobi polynomials (namely, $P^{(\alpha,\beta)}_j$, with $\alpha\neq \beta$ and only one fixed between $\alpha$ and $\beta$).

As in the case $d=2$,  our approach detects a discrepancy in  the behaviour of $\tX_{\ell,m}^\DimD$, depending on whether 
 $m$ is smaller or larger than $\epsilon\ell$ for some fixed $\epsilon\in (0,1)$. 
This corresponds to the fact that, if $m \leq \epsilon \ell$,  the functions in \eqref{eq:Xtilde}
are asymptotically related to Bessel functions,  
 while for $m \geq \epsilon \ell$ 
their asymptotical behaviour is described by Hermite polynomials.
Indeed  a crucial tool in the proof of our pointwise   bounds for  $\tX_{\ell,m}^\DimD$  is provided by
the precise asymptotic approximations of ultraspherical polynomials in terms of Bessel functions and Hermite polynomials 
previously obtained by Boyd and Dunster and by Olver  \cite{BoydDunster, Olver}. We point out that estimates for Hermite and Bessel functions of a similar character to those considered here are available in the literature (see, e.g., \cite{AW,BarceloRuizVega}), but they apply to one-parameter families; in contrast, here we obtain uniform estimates for two-parameter families of ultraspherical polynomials. Similarly, but in a different context, \cite{DM} presents a robust approach that applies to orthonormal expansions associated to second-order ODE on the real line, yielding estimates that are uniform with respect to an additional scale parameter.

Parts of the proofs presented here are similar 
 to those given in \cite[Section 3]{CaCiaMa}, but several variations and new ideas are required when $d>2$. 
As a matter of fact, even in the case $d=2$, here we obtain a substantially stronger decay beyond the transition point in the Hermite regime compared to the one proved in \cite{CaCiaMa}. When comparing results, one should take into account a slight change of notation, since $\ell$ in \cite{CaCiaMa} corresponds to $\ell-1/2$ here.

Let us introduce, for all $d \in \NN$, $d \geq 2$, the index set
\begin{equation}
\label{eq:Id}
I_\DimD = \{ (\ell,m) \tc \ell \in \NN_\DimD, \, m \in \NN_{\DimD-1}, \,  \ell \geq m \}.
\end{equation}
Moreover, for all $\ell,m \in \NN/2$ with $\ell \neq 0$ and $0 \leq m \leq \ell$,
we define the points $a_{\ell,m},b_{\ell,m}\in [0,1]$ by
\begin{equation}\label{eq:def-bldm}
b_{\ell,m} = \frac{m}{\ell}
\end{equation}
and
\begin{equation}\label{eq:def-aldm}
a_{\ell,m}^2 =1-b_{\ell,m}^2=\frac{(\ell-m)(\ell+m)}{\ell^2}.
\end{equation}
One should think of $\pm a_{\ell,m}$ as the values of $x \in [-1,1]$ corresponding to the transition points for $\tX_{\ell,m}^\DimD(x)$, while $b_{\ell,m}$ corresponds to the transition points after the change of variables $y = \sqrt{1-x^2}$.

In the statement below, and throughout the paper, for two given nonnegative quantities $A$ and $B$, we use the notation ``$A \lesssim B$'' to indicate that $A \leq C B$ for some positive constant $C$. We also write $A \simeq B$ as shorthand for $A \lesssim B$ and $B \lesssim A$. Variants such as $\lesssim_k$ and $\simeq_k$ are used to indicate that the implicit constants may depend on the parameter $k$.

\begin{theorem}\label{thm:main}
Let $\DimD \in \NN$, $\DimD \geq 2$.
For all $\epsilon \in (0,1)$, there exists $c \in (0,1)$ such that, for all $(\ell,m) \in I_\DimD$,
if $m \geq \epsilon \ell$, then
\begin{equation}\label{eq:main_olver}
|\tX_{\ell,m}^\DimD(x)| \lesssim_{\DimD,\epsilon} \begin{cases}
(\ell^{-1} + |x^2-a_{\ell,m}^2|)^{-1/4} &\text{for all $x\in[-1,1]$,}\\
|x|^{-1/2} (1-x^2)^{(c\ell-(\DimD-2)/4)_+} &\text{for $|x| \geq 2 \, a_{\ell,m}$.}
\end{cases}
\end{equation}
while, 
 if $m \leq \epsilon \ell$, then
\begin{equation}\label{eq:main_boyddunster}
|\tX^\DimD_{\ell,m}(x)|
\lesssim_{\DimD,\epsilon}  \begin{cases}
y^{-(\DimD-2)/2} \left( \ell^{-2} (1+m)^{4/3} + |y^2-b_{\ell,m}^2|\right)^{-1/4} & \text{for all $x \in[-1,1]$,}\\
\ell^{(\DimD-1)/2} \, 2^{-m} &\text{if $y \leq b_{\ell,m}/(2e)$,}
\end{cases}
\end{equation}
where $y = \sqrt{1-x^2}$.
\end{theorem}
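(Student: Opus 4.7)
The plan is to reduce everything to estimates for the $\DimD$-independent functions $Y_{\ell,m}$ introduced before the statement, exploiting the identity $\tX_{\ell,m}^\DimD(x) = (1-x^2)^{-(\DimD-2)/4} Y_{\ell,m}(x)$, and then to split the analysis according to the ratio $m/\ell$, which governs the asymptotic behaviour of the ultraspherical polynomial. In each regime the strategy is the same: an appropriate uniform asymptotic expansion reduces the problem to pointwise estimates for a well-studied special function (Hermite or Bessel), which are then combined with sharp bounds on the normalisation constant $c_{\ell,m}$ obtained via Stirling's formula.

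For the Hermite regime $m \geq \epsilon\ell$, I would invoke Olver's uniform asymptotic expansion \cite{Olver}, which represents the normalised ultraspherical polynomial as a principal term built from a Hermite function of a suitably rescaled variable, with the Hermite turning point pulled back to $\pm a_{\ell,m}$, plus a controlled lower-order remainder. Combining this with standard uniform Plancherel--Rotach-type bounds for normalised Hermite functions (in the form $(n^{-1/3} + ||x|-\sqrt{2n+1}|)^{-1/4}$) yields the first estimate in \eqref{eq:main_olver}. The second, exponentially small, bound valid for $|x| \geq 2 a_{\ell,m}$ would rely on the strong sub-Gaussian decay of Hermite functions past their turning point: the factor $(1-x^2)^{c\ell}$ is extracted by explicitly tracking Olver's phase function on this region, or equivalently by a direct argument using the expansion of $P^{(m,m)}_{\ell-m-1/2}$ in powers of $(1-x^2)$ and Stirling's formula on $c_{\ell,m}$.

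For the Bessel regime $m \leq \epsilon\ell$, I would perform the change of variables $y=\sqrt{1-x^2}$, under which the transition points move to $\pm b_{\ell,m}$, and apply the uniform asymptotic expansion of Boyd--Dunster \cite{BoydDunster}, which represents the normalised ultraspherical polynomial as a product of an explicit smooth amplitude and a Bessel function $J_m$ of a rescaled argument. The first bound in \eqref{eq:main_boyddunster} would then follow from uniform pointwise estimates for $J_\nu$ of the shape $(\nu^{-2/3} + |y-\nu|)^{-1/4}$ near and past the transition, as in \cite{AW,BarceloRuizVega}, combined with the $y^{-(\DimD-2)/2}$ prefactor coming from the reduction from $\tX_{\ell,m}^\DimD$ to $Y_{\ell,m}$. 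For the deep subcritical range $y \leq b_{\ell,m}/(2e)$, the classical small-argument bound $|J_m(t)| \lesssim (et/(2m))^m/\Gamma(m+1)$ contributes the $2^{-m}$ decay, while Stirling's formula applied to $c_{\ell,m}$ supplies the prefactor $\ell^{(\DimD-1)/2}$.

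The main obstacle is uniformity of the implicit constants with respect to \emph{both} parameters $\ell$ and $m$: the error terms in Olver's and Boyd--Dunster's expansions must be controlled uniformly across the relevant ranges of $m/\ell$, and care is needed near the matching threshold $m \simeq \epsilon\ell$ where one must verify that the two regimes overlap consistently. A further delicate point, absent from \cite{CaCiaMa} in the weaker form proved there, is obtaining the improved $(1-x^2)^{c\ell}$ decay beyond the transition in the Hermite regime: this calls for going slightly past the leading order of Olver's expansion, or for supplementing it by a direct estimate that exploits the factor $(1-x^2)^{m/2-(\DimD-2)/4}$ already built into the definition of $\tX_{\ell,m}^\DimD$ in the region where the exponent is comparable to $\ell$.
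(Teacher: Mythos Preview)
Your broad strategy---reducing to $Y_{\ell,m}$ via \eqref{eq:rel-Y-X} and invoking Olver's and Boyd--Dunster's uniform asymptotics for the first-line estimates in each regime---matches the paper, and indeed Propositions \ref{prp:est_olver_Legendre} and \ref{prp:est_boyddunsterV} are obtained essentially as you describe (with Landau's bound $|J_\nu(z)|\lesssim\nu^{-1/3}$ in place of the Askey--Wainger-type Bessel estimate you cite). The two \emph{decay} estimates, however, are proved by quite different means than you propose.

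For the Hermite-regime decay (second line of \eqref{eq:main_olver}), the paper neither pushes Olver's expansion past the turning point nor relies on the factor $(1-x^2)^{m/2}$. Instead, Proposition \ref{prp:titchmarsh_agmon} uses a purely ODE-based argument: setting $L=(1-x^2)^{1/2}Y_{\ell,m}$, one has $L''=QL$ with $Q$ as in \eqref{eq:defQ}; since $Q>0$ and one can show $L L'<0$ past the modified transition point $\bar x_{\ell,m}$, Titchmarsh's monotonicity argument yields
\[
|L(x)|\le |L(x_*)|\exp\Bigl(-\int_{x_*}^x Q^{1/2}\Bigr),
\]
and an explicit lower bound on $Q^{1/2}$ converts the exponential into $(1-x^2)^{c\ell}$. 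This avoids any error analysis of the asymptotic expansion in the decaying region and also delivers the correct $|x|^{-1/2}$ prefactor, by combining \eqref{eq:est_olver} at $x_*\sim a_{\ell,m}$ with the absorption trick \eqref{eq:swallow}. Your alternative via $(1-x^2)^{m/2}$ does give the right exponent $c\ell$ (since $m\ge\epsilon\ell$), but a priori comes with prefactor $\sim\ell^{1/2}$ rather than $|x|^{-1/2}$; the ODE route handles this cleanly and is what makes the present result genuinely stronger than the corresponding one in \cite{CaCiaMa}.

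For the Bessel-regime decay (second line of \eqref{eq:main_boyddunster}), the paper does not feed the small-argument Bessel bound through Boyd--Dunster. It instead uses the ``universal'' estimate \eqref{eq:universal-bound-Y}, obtained from the branching \eqref{eq:decomp-Hl} together with the dimension formula \eqref{eq:dimensione} for spherical harmonics; Stirling's formula then gives \eqref{eq:est_boyddunster_exp} in one line. Your route via $|J_m(t)|\le (t/2)^m/m!$ is in principle viable, but would additionally require controlling the Liouville map $\zeta_{\ell,m}$ and the Boyd--Dunster amplitude in the region $y\le b_{\ell,m}/(2e)$; the representation-theoretic bound bypasses all of this.
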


The above estimates will be derived from a series of bounds for the $\DimD$-independent functions $Y_{\ell,m}$ stated in Propositions \ref{prp:est_boyddunsterEXP}, \ref{prp:est_boyddunsterV}, \ref{prp:est_olver_Legendre}, and \ref{prp:titchmarsh_agmon}. It is important to remark that the dependence on $\DimD$ of the above estimates is not only due to the factor $(1-x^2)^{-\DimD/4}$ in \eqref{eq:Xtilde}, but also to the range of indices $I_\DimD$.

\section{Notation and preliminaries}\label{s:notation}
By the symbol $P^{(\alpha,\beta)}_j$ we shall denote the Jacobi polynomial of degree $j \in \NN$ and indices $\alpha,\beta>-1$, defined by means of Rodrigues' formula:
\begin{equation*}
P^{(\alpha,\beta)}_j(x)= \frac{(-1)^j}{2^j\, j!} (1-x)^{-\alpha} (1+x)^{-\beta} \left(\frac{d}{dx}\right)^j \left((1-x)^{\alpha+j} (1+x)^{\beta+j} \right)
\end{equation*}
for $x \in (-1,1)$.
We recall, in particular, the symmetry relation 
\begin{equation*}
P_j^{(\alpha,\beta)}(x) = (-1)^j P_j^{(\beta,\alpha)}(x),
\end{equation*}
for $j \in \NN$, $\alpha,\beta>-1$ and $x \in \RR$. 

In the case $\alpha=\beta$, Jacobi polynomials reduce to ultraspherical polynomials \cite[(4.7.1)]
{Szego}.
In particular, by using the relation between Jacobi polynomials and associated Legendre functions (Ferrers functions), namely,
\[
P^{(\alpha,\alpha)}_k(x) = \frac{2^\alpha \Gamma(\alpha+k)}{k!} (1-x^2)^{-\alpha/2} \LegF_{\alpha+k}^{-\alpha}(x)
\]
for $x \in (-1,1)$, $k \in \NN$, $\alpha \geq 0$ (see \cite[formulas 14.3.1, 14.3.3, 15.8.1 and 18.5.7]{DLMF}), we can write the functions $\tX_{\ell,m}^\DimD$ as follows:
\begin{equation}\label{eq:tX_ferrers}
\tX_{\ell,m}^\DimD(x) = \sqrt{\frac{\ell\,  \Gamma (\ell+m+1/2)} {\Gamma(\ell-m+1/2) }} (1-x^2)^{-(\DimD-2)/4} \LegF^{-m}_{\ell-1/2}(x).
\end{equation}

Let now $I = \{ (\ell,m) \in (\NN/2)^2 \tc \ell-m-1/2 \in \NN\}$.
For $(\ell,m) \in I$, define
\begin{equation}\label{def:Ylm}
\begin{split}
Y_{\ell,m}(x)
&= 
c_{\ell m}
 (1-x^2)^{m/2} P_{\ell-m-1/2}^{(m,m)}(x) \\
&= \sqrt{\frac{\ell\,  \Gamma (\ell+m+1/2)} {\Gamma(\ell-m+1/2) }} \LegF^{-m}_{\ell-1/2}(x).
\end{split}
\end{equation}
Note that, if $d \geq 2$ and $m \in \NN_{d-1}$, then
\begin{equation}\label{eq:rel-Y-X}
\tX_{\ell,m}^\DimD(x) = (1-x^2)^{-(\DimD-2)/4} Y_{\ell,m}(x). 
\end{equation}

\section{Results from representation theory}
We recall some well known facts concerning the spectral theory of  the Laplace--Beltrami operator
$\Delta_\DimD$ 
 on the unit sphere $\sfera^\DimD$ in $\RR^{1+\DimD}$.
 For a detailed account of the theory  we refer to  \cite[Ch.\ 4]{Stein-Weiss} or \cite[Ch.\ 5]{AxBR}.
  
 The operator $\Delta_\DimD$ is essentially self-adjoint on $L^2(\sfera^\DimD)$, with discrete spectrum.
The symbol  $\Harm^\ell(\sfera^\DimD)$ will denote  the eigenspace of $\Delta_\DimD$
corresponding to the eigenvalue 
\begin{equation}\label{eq:laplace_eigenvalues}
\lambda_\ell^\DimD \defeq ( \ell+(\DimD-1)/2 ) ( \ell-(\DimD-1)/2 ),
\end{equation}
where $\ell\in\NN_\DimD$.
It is well-known that  
 $\Harm^\ell(\sfera^\DimD)$ consists of all spherical harmonics of degree $\ell' = \ell-(\DimD-1)/2 \in \NN$, that is, of all restrictions to $\sfera^\DimD$ of homogeneous harmonic polynomials on $\RR^{1+\DimD}$ of degree $\ell'$.

The following  facts on the spaces $\Harm^\ell(\sfera^\DimD)$  are standard.
\begin{enumerate}\item
Since $\Delta_\DimD$ is self-adjoint,
its eigenspaces are mutually orthogonal in $L^{2}(\sfera^\DimD)$, i.e.,
\[
\Harm^{\ell_1}(\sfera^\DimD) \perp \Harm^{\ell_2}(\sfera^\DimD)
\]
for $\ell_1,\ell_2 \in \NN_\DimD$, $\ell_1 \neq \ell_2$.
\item
Each $\Harm^\ell(\sfera^\DimD)$
is a finite-dimensional space of dimension
\begin{equation}\label{eq:dimensione}
{\dim(\Harm^\ell(\sfera^d))}=\binom{\ell'+\DimD}{\ell'}-\binom{\ell'+\DimD-2}{\ell'-2} 
= \frac{2\ell'+\DimD-1}{\DimD-1} \binom{\ell'+\DimD-2}{\DimD-2}
\end{equation}
for $\ell = \ell'+(d-1)/2 \in\NN_\DimD$ (the last identity in \eqref{eq:dimensione} only makes sense when $\DimD>1$).
In particular
\begin{equation}\label{eq:dimensione_stima}
{\dim(\Harm^\ell(\sfera^d))} \simeq_\DimD \ell^{\DimD-1}
\end{equation}
Here and subsequently, we adhere to the convention that $0^0 = 1$, so that 
this estimate is also valid when $\DimD=1$.
\item The spaces $\Harm^\ell(\sfera^\DimD)$ are $\group{O}(n+1)$-invariant for every $\ell\in\NN_\DimD$.
\item The representation of $\group{O}(n+1)$ on the space $\Harm^\ell(\sfera^\DimD)$
is irreducible.
\end{enumerate}

Next, we introduce  a system of ``cylindrical coordinates'' on $\sfera^\DimD$, $\DimD \geq 2$.
For all $\omega \in \sfera^{\DimD-1}$ and 
$x\in [-1,1]$,
one defines the point $\spnt{x}{\omega} \in \sfera^\DimD$ as
\begin{equation}\label{eq:coordsfera}
\spnt{x}{\omega} = (\sqrt{1-x^2}\, \omega, x).
\end{equation}
Then \eqref{eq:coordsfera} yields  a ``system of coordinates'' on $\sfera^\DimD$, modulo null sets, since, apart from $x= \pm 1$, the map $(\omega,x) \mapsto \spnt{\omega}{x}$ is a diffeomorphism onto its image, which is the sphere with the  two poles removed.

In these coordinates, the spherical measure $\meas_\DimD$ on $\sfera^\DimD$ is given by
\[
d\meas_\DimD(\spnt{\omega}{x}) =(1-x^2)^{(\DimD-2)/2} \,dx \,d\meas_{\DimD-1}(\omega),
\]
where $\meas_{\DimD-1}$ is the spherical measure on $\sfera^{\DimD-1}$. We recall that
\begin{equation}\label{eq:mis-sfera}
\meas_\DimD(\sfera^d) = \frac{(d+1)\pi^{(d+1)/2}}{\Gamma((d+3)/2)}. 
\end{equation}

The following formula, proved in \cite[Ch.\ 4, Corollary 2.9]{Stein-Weiss}, will be repeatedly used 
throughout the paper: if  $E^\DimD_\ell$ is any orthonormal basis of $\Harm^\ell(\sfera^\DimD)$,
then
\begin{equation}\label{eq:SW-dim}
\sum_{Z \in E^\DimD_\ell} | Z(z) |^2 = \meas_\DimD(\sfera^\DimD)^{-1} \, {\dim(\Harm^\ell(\sfera^d))}
\end{equation}
for all $z \in \sfera^\DimD$.

The above-mentioned properties as a whole imply
 a universal bound for $Y_{\ell,m}(x)$,  which will be useful, in particular, in the Bessel regime.
\begin{proposition}
For all $(\ell,m) \in I$
 and all $x\in [-1,1]$,
\begin{equation}\label{eq:universal-bound-Y}
 Y_{\ell,m}(x)^2 \lesssim (1-x^2)^{m}  \frac{\ell}{\sqrt{m+1}} \binom{\ell-1/2+m}{2m}. 
\end{equation}
Moreover
\begin{equation}\label{eq:universal-bound-Y-2}
Y_{\ell,m}(x)^2 \lesssim \begin{cases} \ell^{1/2} & \text{if } m \in \NN,\\
(1-x^2)^{1/2} \ell/m^{1/2} & \text{if } m \in \NN+1/2.
\end{cases}
\end{equation}
\end{proposition}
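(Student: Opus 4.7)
The plan is to derive both estimates from a master pointwise inequality obtained via the orthogonal decomposition \eqref{eq:decomp-Hl} and two applications of the addition formula \eqref{eq:SW-dim}, with the ambient dimension $\DimD$ chosen differently for the two bounds.

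Fix $(\ell,m) \in I$ and an ambient dimension $\DimD \geq 2$ such that $(\ell,m) \in I_\DimD$. Let $\{f_j\}_{j=1}^N$ be an $L^2(\sfera^{\DimD-1})$-orthonormal basis of $\Harm^m(\sfera^{\DimD-1})$; by the isometric embedding described after \eqref{eq:decomp-Hl}, the functions $\tX^\DimD_{\ell,m} \otimes f_j$ form an orthonormal subset of $\Harm^\ell(\sfera^\DimD)$ and may be completed to a full orthonormal basis $E^\DimD_\ell$. Evaluating \eqref{eq:SW-dim} at $\spnt{x}{\omega}$ and retaining only the contribution of the selected basis elements yields
\begin{equation*}
|\tX^\DimD_{\ell,m}(x)|^2 \sum_{j=1}^N |f_j(\omega)|^2 \;\leq\; \frac{\dim(\Harm^\ell(\sfera^\DimD))}{\meas_\DimD(\sfera^\DimD)}.
\end{equation*}
A second use of \eqref{eq:SW-dim}, now on $\sfera^{\DimD-1}$, identifies the inner sum with $\dim(\Harm^m(\sfera^{\DimD-1}))/\meas_{\DimD-1}(\sfera^{\DimD-1})$, and then \eqref{eq:rel-Y-X} produces the master inequality
\begin{equation*}
Y_{\ell,m}(x)^2 \;\leq\; (1-x^2)^{(\DimD-2)/2}\,\frac{\meas_{\DimD-1}(\sfera^{\DimD-1})}{\meas_\DimD(\sfera^\DimD)}\cdot\frac{\dim(\Harm^\ell(\sfera^\DimD))}{\dim(\Harm^m(\sfera^{\DimD-1}))}.
\end{equation*}

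For \eqref{eq:universal-bound-Y} I would apply this master inequality with $\DimD = 2m+2$, the maximal admissible ambient dimension: the parity constraints built into $I$ ensure that $(\ell,m) \in I_{2m+2}$ in every case, the prefactor becomes exactly $(1-x^2)^m$, and $\Harm^m(\sfera^{2m+1})$ collapses to the one-dimensional space of constants. The remaining dimension $\dim(\Harm^\ell(\sfera^{2m+2}))$ is read off from \eqref{eq:dimensione} as $(2\ell/(2m+1))\binom{\ell+m-1/2}{2m}$, and the measure quotient, evaluated via \eqref{eq:mis-sfera}, reduces to a Gamma-function ratio controlled by Stirling as $\simeq\sqrt{m+1}$; inserting these ingredients into the master inequality reproduces exactly \eqref{eq:universal-bound-Y}.

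For \eqref{eq:universal-bound-Y-2} I would instead use the minimal admissible dimension, namely $\DimD = 2$ when $m \in \NN$ and $\DimD = 3$ when $m \in \NN+1/2$, since these are precisely the choices that make the prefactor $(1-x^2)^{(\DimD-2)/2}$ match the weight appearing in the statement. The corresponding inputs are explicit: $\dim(\Harm^\ell(\sfera^2)) = 2\ell$, $\dim(\Harm^\ell(\sfera^3)) = \ell^2$, $\dim(\Harm^m(\sfera^1)) \in \{1,2\}$, $\dim(\Harm^m(\sfera^2)) = 2m$, and the measure quotients are explicit constants. The main obstacle I anticipate is matching the precise powers of $\ell$ and $m$ claimed in \eqref{eq:universal-bound-Y-2}: the bare master inequality applied with these minimal dimensions falls short, so to close the gap I expect one has to combine its output with the $L^2$-normalization $\int |Y_{\ell,m}|^2\,dx = 1$ that follows from \eqref{eq:Xlm_def_iniziale-norm} and \eqref{eq:rel-Y-X}, effectively trading pointwise for mean-square information over the effective support of $Y_{\ell,m}$.
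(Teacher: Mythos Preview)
Your derivation of the master inequality via \eqref{eq:decomp-Hl} and two applications of \eqref{eq:SW-dim}, together with the choice of the extremal admissible dimension $\DimD$, is exactly the paper's argument; for \eqref{eq:universal-bound-Y} with $\DimD=2m+2$ your computation is correct and reproduces the paper's proof verbatim.

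For \eqref{eq:universal-bound-Y-2} you have also done the right thing, and your observation that the master inequality with $\DimD\in\{2,3\}$ only yields
\[
Y_{\ell,m}(x)^2 \lesssim \ell \quad (m\in\NN), \qquad Y_{\ell,m}(x)^2 \lesssim (1-x^2)^{1/2}\,\ell^2/m \quad (m\in\NN+1/2)
\]
is accurate. The gap you perceive, however, is not a gap in the method but a misprint in the stated exponents: the bound $Y_{\ell,m}^2\lesssim\ell^{1/2}$ is actually false, as the zonal case shows ($Y_{\ell,0}(1)=c_{\ell,0}=\sqrt{\ell}$, hence $Y_{\ell,0}(1)^2=\ell$). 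The paper's own proof is literally ``apply \eqref{eq:est_Y_dim} with the smallest admissible $\DimD$'' with no additional input, so it establishes precisely the bounds you obtained and nothing sharper. Consistently, the only later use of \eqref{eq:universal-bound-Y-2} (in the proof of Proposition~\ref{prp:est_boyddunsterV}) quotes the conclusion $|Y_{\ell,m}(x)|\lesssim\ell^{1/2}$ in the region $y\simeq b_{\ell,m}$, which is exactly what your master-inequality bounds give. Your proposed rescue via the $L^2$-normalisation \eqref{eq:Xlm_def_iniziale-norm} should therefore be dropped: it cannot produce the printed exponents (they are not true), and it is not needed for what the paper actually uses.
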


\begin{proof}
Let $\ell \in \NN_d$, $d \geq 2$.
By the decomposition \eqref{eq:decomp-Hl},
if $K^d_\ell : \sfera^d \times \sfera^d \to \RR$ is the integral kernel of the orthogonal projection of $L^2(\sfera^d)$ onto $\Harm^\ell(\sfera^d)$, then
\[
K^d_{\ell}(\spnt{x}{\omega},\spnt{x'}{\omega'}) = \sum_{\substack{m \leq \ell \\ m \in \NN_{d-1}}}\tX^d_{\ell,m}(x) \tX^d_{\ell,m}(x') K^{d-1}_{m}(\omega,\omega').
\]
Hence, in light of \eqref{eq:SW-dim},
\begin{equation}\label{eq:id_projection}
\frac{\dim(\Harm^\ell(\sfera^d))}{\meas_d(\sfera^d)} = \sum_{\substack{m \leq \ell \\ m \in \NN_{d-1}}} \tX^d_{\ell,m}(x)^2 \frac{\dim(\Harm^m(\sfera^{d-1}))}{\meas_{d-1}(\sfera^{d-1})}
\end{equation}
and in particular
\begin{equation}\label{eq:est_Y_dim}
Y_{\ell,m}(x)^2 = (1-x^2)^{(\DimD-2)/4} \tX^d_{\ell,m}(x)^2 \leq (1-x^2)^{(\DimD-2)/2} \frac{\dim(\Harm^\ell(\sfera^d))}{\dim(\Harm^m(\sfera^{d-1}))} \frac{\meas_{d-1}(\sfera^{d-1})}{\meas_d(\sfera^d)} 
\end{equation}
for all $(\ell,m) \in I_d$. Now, for a given $(\ell,m) \in I$, the estimates \eqref{eq:universal-bound-Y} and \eqref{eq:universal-bound-Y-2} follow from \eqref{eq:est_Y_dim} by choosing $\DimD \geq 2$ to be, respectively, the largest and the smallest possible so that $(\ell,m) \in I_\DimD$, and using \eqref{eq:mis-sfera} and \eqref{eq:dimensione}.
\end{proof}

\section{The Bessel regime}

In this section we prove some pointwise estimates    for $Y_{\ell,m}$
and $\tX_{\ell,m}^\DimD$ in the range $m \leq \epsilon \ell$,  for some $\epsilon \in (0,1)$.

First, from the bound \eqref{eq:universal-bound-Y} we readily derive an estimate that is particularly effective in the region where $y = \sqrt{1-x^2} \ll b_{\ell,m}$.

\begin{proposition}\label{prp:est_boyddunsterEXP}
Let $\epsilon \in (0,1)$. For all $(\ell,m) \in I$  such that
  $m \leq \epsilon \ell$, and for all $x \in[-1,1]$,
\begin{equation}\label{eq:est_boyddunster_exp}
\big| Y_{\ell,m}(x) \big| 
\lesssim_\epsilon
b_{\ell,m}^{-(m+1/2)} (ye)^m,
\end{equation}
where $y = \sqrt{1-x^2}$.
\end{proposition}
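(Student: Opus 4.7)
The plan is to deduce the estimate directly from the universal bound \eqref{eq:universal-bound-Y}, with essentially no new ideas beyond a Stirling-type asymptotic of the binomial coefficient appearing there. Comparing \eqref{eq:universal-bound-Y}, after rewriting $(1-x^2)^m = y^{2m}$, with the square of the desired conclusion, the proof reduces to showing
\[
\frac{\ell}{\sqrt{m+1}} \binom{\ell-1/2+m}{2m} \lesssim e^{2m}\, b_{\ell,m}^{-(2m+1)}.
\]
The case $m=0$ is trivial, since then $b_{\ell,m}=0$ and the right-hand side is $+\infty$; henceforth I would assume $m \geq 1/2$.

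Next I would apply the sharp Stirling lower bound $(2m)! \geq \sqrt{4\pi m}\,(2m/e)^{2m}$ in combination with the trivial upper bound $\Gamma(\ell+m+1/2)/\Gamma(\ell-m+1/2) \leq (\ell+m)^{2m} \leq (2\ell)^{2m}$ (which uses $m \leq \ell$) to obtain
\[
\binom{\ell-1/2+m}{2m} \lesssim \frac{(e\ell/m)^{2m}}{\sqrt{m}}.
\]
Substituting this into \eqref{eq:universal-bound-Y}, using that $\sqrt{m(m+1)} \simeq m$ uniformly for $m \geq 1/2$, and recalling $\ell/m = b_{\ell,m}^{-1}$, I arrive at $Y_{\ell,m}(x)^2 \lesssim (ye)^{2m} b_{\ell,m}^{-(2m+1)}$, and the statement follows upon taking square roots.

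The only mild subtlety is to retain the $\sqrt{4\pi m}$ prefactor from Stirling: the cruder estimate $\binom{n}{k} \leq (en/k)^k$ would be off by exactly the factor $\sqrt{m}$ needed to absorb the $(m+1)^{-1/2}$ against the stray factor of $\ell$ in \eqref{eq:universal-bound-Y}. I remark that the argument works uniformly over the whole range $0 \leq m \leq \ell$ and does not use the hypothesis $m \leq \epsilon \ell$ at all, so no dependence on $\epsilon$ enters the implicit constant; the hypothesis appears in the statement presumably only to match the analogous one in the other bounds of this section.
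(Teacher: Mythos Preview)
Your proof is correct and follows essentially the same route as the paper's: both start from the universal bound \eqref{eq:universal-bound-Y}, estimate the binomial coefficient via $\binom{\ell-1/2+m}{2m} \leq (\ell+m-1/2)^{2m}/(2m)!$ together with the sharp Stirling lower bound for $(2m)!$, and then simplify using $\ell/m = b_{\ell,m}^{-1}$. Your additional remark that the hypothesis $m \leq \epsilon\ell$ is never used is accurate; the paper's proof records a $\lesssim_\epsilon$ at one step, but as you observe, the inequality $(\ell-1/2+m)/(2m) \leq \ell/m$ holds for all $m \leq \ell$ with a universal constant, so no $\epsilon$-dependence is actually introduced.
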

\begin{proof}
 For $m=0$ the estimate is trivial, so we may assume $m>0$.
  The universal bound  \eqref{eq:universal-bound-Y}
 implies that
  for  all $x \in [0,1]$ and all $(\ell,m) \in I$, with 
	$0 < m \leq \epsilon \ell$,
   \begin{align*}
 Y_{\ell,m}(x)^2
 &\lesssim y^{2m} \,  \frac{\ell}{\sqrt{m}} \,  \binom{\ell-1/2+m}{2m}\\
  &\lesssim_\epsilon y^{2m} \,  \frac{\ell}{\sqrt{m}} \,    \frac{1}{\sqrt{2\pi(2m)}} \,\Big(  \frac{(\ell-1/2+m)e}{2m}\Big)^{2m}\\
  &\lesssim y^{2m} \,  \frac{\ell}{{m}} \,\Big(  \frac{ \ell\,e}{m}\Big)^{2m},
\end{align*}   
as a consequence of   Stirling's approximation.  This proves  
\eqref{eq:est_boyddunster_exp}.
\end{proof}

A more precise estimate in the region where $y \gtrsim b_{\ell,m}$ can be derived from a uniform asymptotic approximation
for the associated Legendre functions $\LegF^{-m}_{\ell-1/2}$
 in terms of Bessel functions,
previously proved in
\cite{BoydDunster}. This was shown in \cite[Proposition 3.5]{CaCiaMa} in the case where $m$ is integer. The case where $m$ is half-integer can be treated similarly, however the proof requires a number of modifications, mainly due to the fact that the proof in \cite{CaCiaMa} exploits certain estimates for spherical harmonics on $\sfera^2$ from \cite{BDWZ}, which do not directly apply to the case where $m$ is not an integer. The proof presented below, instead, applies irrespective of whether $m$ is integer, and exploits the following bound from \cite{La} for the Bessel function of the first kind $J_\nu$  of order $\nu \in (-1,\infty)$.

\begin{lemma}\label{lem:besselestimate}
There exists $b \in (0,1)$ such that, for all $\nu \in (0,\infty)$ and $z \in \RR$,
\[
|J_\nu(z)| \leq b \nu^{-1/3}.
\]
\end{lemma}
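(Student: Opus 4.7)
Plan. The plan is to combine elementary estimates on the tails with Olver's uniform Airy-function asymptotic approximation of $J_\nu$ near the transition $z = \nu$. By the standard conventions for Bessel functions, I may restrict to $z \geq 0$.

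For $\nu \in (0,1]$ the quantity $b \nu^{-1/3}$ is bounded below by $b$, so it suffices to show that $\sup_{z \geq 0,\, \nu \in (0,1]} |J_\nu(z)|$ is finite. I would obtain this by combining the power-series representation of $J_\nu$ near $z = 0$ (which is uniformly bounded for $z$ bounded and $\nu \in (0,1]$) with the classical asymptotic
\[
J_\nu(z) = \sqrt{\frac{2}{\pi z}} \cos\bigl(z - \nu\pi/2 - \pi/4\bigr) + O(z^{-3/2})
\]
for $z \to \infty$, uniformly in $\nu$ on compact sets.

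For $\nu \geq 1$ I would further split into two subcases. For $z \leq \nu/2$, a Debye-type argument (or direct manipulation of the power series combined with Stirling's formula) yields an exponentially decaying bound $|J_\nu(z)| \leq e^{-c\nu}$, which is much stronger than $\nu^{-1/3}$. For $z \geq \nu/2$ I would invoke Olver's uniform asymptotic expansion
\[
J_\nu(\nu x) = \left(\frac{4\zeta(x)}{1-x^2}\right)^{1/4} \frac{\mathrm{Ai}\bigl(-\nu^{2/3}\zeta(x)\bigr)}{\nu^{1/3}} + O(\nu^{-5/3}),
\]
valid uniformly for $x$ in compact subsets of $(0,\infty)$, where $\zeta$ is a specific smooth diffeomorphism with $\zeta(1) = 0$. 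Two ingredients then deliver the bound: (i) the prefactor $(4\zeta/(1-x^2))^{1/4}$ is smooth, positive, bounded on compact subintervals of $(0,\infty)$, and decays like $x^{-1/3}$ at infinity; (ii) the Airy function $\mathrm{Ai}$ is globally bounded on $\RR$, with the sharper decay $|\mathrm{Ai}(t)| \lesssim |t|^{-1/4}$ as $t \to -\infty$. Combining (i) and (ii) gives $|J_\nu(\nu x)| \lesssim \nu^{-1/3}$ for $x$ in any bounded interval bounded away from $0$; the refinement in (ii) compensates for the growth of $\zeta$ at infinity and yields the uniform bound $|J_\nu(z)| \lesssim z^{-1/2} \leq \nu^{-1/3}$ in the deep oscillatory regime $z \geq \nu$.

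The hard part will be controlling the error term in Olver's expansion uniformly across the turning point $x = 1$: this is the core of Olver's theory of uniform asymptotic expansions for second-order linear ODEs with a simple turning point, handled via the Liouville--Green transformation, and it is precisely the mechanism through which the $\nu^{-1/3}$ scaling arises intrinsically, as the natural Airy scaling at the turning point.
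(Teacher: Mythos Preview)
The paper does not give its own proof of this lemma: it is simply quoted from Landau \cite{La}, where the bound (with the sharp constant $b=0.674885\ldots$) is obtained by a direct argument combining monotonicity properties of an Airy-type majorant with an integral representation, rather than via Olver's full turning-point machinery.

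Your outlined route via Olver's uniform Airy expansion is a legitimate alternative and would yield the qualitative bound $|J_\nu(z)| \lesssim \nu^{-1/3}$. Two small remarks. First, you say the expansion is ``valid uniformly for $x$ in compact subsets of $(0,\infty)$'', but you then need it on the unbounded set $x \geq 1/2$; fortunately Olver's theorem does give uniformity on all of $[\delta,\infty)$, so this is only a phrasing issue. Second, the lemma as stated asks for $b \in (0,1)$. Your asymptotic argument produces \emph{some} finite constant, but showing it can be taken strictly below $1$ is not automatic from the expansion with an unspecified $O$-term. Landau's approach, being non-asymptotic, delivers the explicit constant directly. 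For the paper's application (Proposition~\ref{prp:est_boyddunsterV}) only finiteness of the constant is actually used, so your argument suffices for that purpose; if you want the statement exactly as written, you would either need to quantify Olver's error bounds or, more simply, cite Landau.
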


By combining this bound with the results of  \cite{BoydDunster} we can prove the following estimate.

\begin{proposition}\label{prp:est_boyddunsterV}
Let $\epsilon \in (0,1)$. 
The following bounds hold for all $(\ell,m) \in I$  such that
  $m \leq \epsilon \ell$, and for all $x \in[-1,1]$:
\begin{equation}\label{eq:est_boyddunster}
\big| Y_{\ell,m}(x) \big| 
\lesssim_\epsilon
\left( \frac{(1+m)^{4/3}}{\ell^{2}} + |y^2-b_{\ell,m}^2|\right)^{-1/4} ,
\end{equation}
where $y = \sqrt{1-x^2}$.
\end{proposition}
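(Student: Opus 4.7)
The strategy follows \cite[Proposition 3.5]{CaCiaMa} but must accommodate half-integer $m$; the previous argument relied on spherical-harmonics estimates on $\sfera^2$ from \cite{BDWZ} that are specific to integer order, and here we replace them with the uniform Bessel bound of Lemma \ref{lem:besselestimate}, which does not distinguish integer from half-integer order.

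First I would invoke the Boyd--Dunster uniform asymptotic approximation from \cite{BoydDunster}, which in the regime $m \leq \epsilon\ell$ writes
\begin{equation*}
\LegF^{-m}_{\ell-1/2}(\cos\theta) = \ell^{-m}\, E(\theta)\, J_m(\ell\,\zeta(\theta)) + R(\theta),
\end{equation*}
where $\zeta$ is the Liouville--Green change of variable that sends the physical turning point $\sin\theta = b_{\ell,m}$ to the Bessel turning point $\ell\zeta = m$, the envelope $E$ is uniformly bounded on $(0,\pi)$ for $m \leq \epsilon \ell$, and the remainder $R$ is smaller than the main term by a relative factor $\bigO(1/\ell)$, also uniformly. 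Standard computations starting from the defining integral for $\zeta$ yield the key Jacobian-type comparison
\begin{equation*}
|\ell^2 \zeta(\theta)^2 - m^2| \simeq_\epsilon \ell^2\,|\sin^2\theta - b_{\ell,m}^2|
\end{equation*}
on the full interval $\theta \in (0,\pi)$.

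Next I would substitute this expansion into \eqref{def:Ylm}. Stirling's formula applied to the Gamma ratio gives $\sqrt{\ell\,\Gamma(\ell+m+1/2)/\Gamma(\ell-m+1/2)} \simeq_\epsilon \ell^{m+1/2}$ uniformly for $m \leq \epsilon\ell$, so the factor $\ell^m$ cancels against the $\ell^{-m}$ in the envelope, leading to $|Y_{\ell,m}(x)| \lesssim_\epsilon \ell^{1/2}\, |J_m(\ell\,\zeta(\theta))|$ up to a controlled remainder. The last ingredient is a uniform Bessel estimate: combining Lemma \ref{lem:besselestimate}, which gives $|J_m(z)| \lesssim m^{-1/3}$ and thereby controls the transition region $|z-m| \lesssim m^{1/3}$, with the classical oscillatory bound $|J_m(z)| \lesssim (z^2 - m^2)^{-1/4}$ valid for $z \geq m$, yields
\begin{equation*}
|J_m(z)| \lesssim \left( |z^2 - m^2| + m^{4/3} \right)^{-1/4}
\end{equation*}
uniformly for $z \geq 0$ and $m \geq 1$. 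Setting $z = \ell\,\zeta(\theta)$ and applying the Jacobian comparison, this translates into the bound $|J_m(\ell\zeta(\theta))| \lesssim_\epsilon \ell^{-1/2}(|y^2 - b_{\ell,m}^2| + (1+m)^{4/3}/\ell^2)^{-1/4}$, which combined with the previous step gives \eqref{eq:est_boyddunster}.

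The main obstacle, I expect, is making the combined Bessel bound rigorous in the decay region $z \leq m$, where the standard asymptotics only provide exponential decay rather than the prescribed power law; Lemma \ref{lem:besselestimate} handles the neighbourhood of $z = m$, but for $z$ substantially smaller than $m$ one needs to bypass the Boyd--Dunster expansion entirely and appeal to Proposition \ref{prp:est_boyddunsterEXP}, verifying that the exponential decay obtained there dominates the polynomial upper bound claimed here. Gluing the three regimes (oscillatory, transitional, deep decay) together, together with a separate treatment of the edge case $m = 0$ where the constant $m^{-1/3}$ is meaningless, is mechanical but requires care with the implicit $\epsilon$-dependence of the constants throughout.
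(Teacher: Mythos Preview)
Your outline follows the same overall strategy as the paper (Boyd--Dunster approximation combined with Landau's uniform Bessel bound), but the execution diverges at the key technical point and your one-shot version has a genuine gap.

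You assert a global Jacobian comparison $|\ell^2\zeta^2 - m^2| \simeq_\epsilon \ell^2\,|y^2 - b_{\ell,m}^2|$ on all of $(0,\pi)$ as ``standard computations''. The paper neither proves nor needs this. Instead it first quotes the uniform boundedness of the right-hand side of the Boyd--Dunster formula (\cite[(3.20)]{CaCiaMa}) to obtain $|Y_{\ell,m}(x)| \lesssim_\epsilon |y^2-b_{\ell,m}^2|^{-1/4}$ globally; this already gives \eqref{eq:est_boyddunster} everywhere except on the narrow window $b_{\ell,m}/2 \leq y \leq b_{\ell,m}(1+\delta m^{-2/3})$. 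Only there does one need the sharper bound $\ell^{1/2}(1+m)^{-1/3}$, and only there does the paper establish the comparison $(\zeta_{\ell,m}(x)-b_{\ell,m}^2)/(y^2-b_{\ell,m}^2)\simeq_\epsilon 1$ (this is the claim \eqref{eq:boyddunster_claim_2}, whose proof occupies the second half of the argument). The point of localizing is not merely convenience: it is used to verify that the Bessel argument $\ell\,\zeta_{\ell,m}(x)^{1/2}$ stays below the threshold $X_m$, so that the error term $E_m^{-1}M_m$ in the Boyd--Dunster expansion coincides with $\sqrt{2}\,J_m$ and Lemma~\ref{lem:besselestimate} applies to the \emph{full} right-hand side, not just the leading term. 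Your account treats the remainder as ``smaller by a relative factor $\bigO(1/\ell)$'' relative to $J_m$, but it is controlled by $E_m^{-1}M_m$, which is not the same object for $z>X_m$; without the localization (or separate $M_m$ bounds near $z=m$, which you do not invoke) the improvement to $m^{-1/3}$ does not follow.

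Your proposed detour through Proposition~\ref{prp:est_boyddunsterEXP} for the deep-decay region is unnecessary: once the global bound $|Y_{\ell,m}|\lesssim_\epsilon |y^2-b_{\ell,m}^2|^{-1/4}$ is in hand, the region $y\leq b_{\ell,m}/2$ is already covered, since $|y^2-b_{\ell,m}^2|$ is bounded below by a fixed multiple of $b_{\ell,m}^2$ there. The paper never appeals to Proposition~\ref{prp:est_boyddunsterEXP} in this proof.
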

\begin{proof}
Without loss of generality we may assume $x \geq 0$.
Following the proof of \cite[Proposition 3.5]{CaCiaMa}, by using the results of \cite{BoydDunster} we can write
\begin{equation}\label{eq:boyddunster_Xapprox}
\begin{split}
 |y^2-b_{\ell,m}^2|^{1/4} \,Y_{\ell,m}(x) 
&= \tilde \varkappa_{\ell,m} |\ell^2 \zeta_{\ell,m}(x)-m^2|^{1/4} \\
&\times \bigl[J_m(\ell \,\zeta_{\ell,m}(x)^{1/2}) \\
&+ E_m^{-1} M_m(\ell \,\zeta_{\ell,m}(x)^{1/2}) \, \bigO(\ell^{-1}) \bigr],
\end{split}
\end{equation}
uniformly in $x \in [0,1]$ and $(\ell,m) \in I$ with $m \leq \epsilon \ell$.
Here $y = \sqrt{1-x^2}$ and $\tilde \varkappa_{\ell,m} \simeq 1$
uniformly in $(\ell,m) \in I$; 
moreover,
$E_m^{-1} M_m$ is
the pointwise quotient of the auxiliary functions $M_m$ and $E_m$ introduced 
in \cite[\S 3]{BoydDunster} and $\zeta_{\ell,m} : [0,1] \to [0,\zeta_{\ell,m}(0)]$ is the decreasing bijection satisfying $\zeta_{\ell,m}(a_{\ell,m}) = b_{\ell,m}^2$ and implicitly defined by
\begin{align}
\int_{b_{\ell,m}^2}^{\zeta_{\ell,m}(x)} \frac{(\xi-b_{\ell,m}^2)^{1/2}}{2\xi} \,d\xi &= \int_x^{a_{\ell,m}} \frac{(a_{\ell,m}^2-s^2)^{1/2}}{1-s^2} \,ds \qquad\text{($0 \leq x \leq a_{\ell,m}$),} \label{eq:def_BDzeta_upper}\\
\int_{\zeta_{\ell,m}(x)}^{b_{\ell,m}^2} \frac{(b_{\ell,m}^2-\xi)^{1/2}}{2\xi} \,d\xi &= \int_{a_{\ell,m}}^x \frac{(s^2-a_{\ell,m}^2)^{1/2}}{1-s^2} \,ds \qquad\text{($a_{\ell,m} \leq x \leq 1$).} \label{eq:def_BDzeta_lower}
\end{align}
Notice that $\ell$ in \cite{CaCiaMa}
corresponds to $\ell-1/2$ here.

The same argument as in \cite{CaCiaMa} (see formula (3.20) there) shows that the right-hand side of \eqref{eq:boyddunster_Xapprox} is uniformly bounded, thus yielding that
\begin{equation}\label{eq:boyddunster_bound_X_bis}
| Y_{\ell,m}(x)| \lesssim_\epsilon \, |y^2-b_{\ell,m}^2|^{-1/4},
\end{equation}
 uniformly in $x \in [0,1]$ and $(\ell,m) \in I$ with $m \leq \epsilon \ell$.
Hence the proof of    
\eqref{eq:est_boyddunster}
will be complete if we show that
\begin{equation}\label{eq:est_boyddunster_unif}
 |Y_{\ell,m}(x)| \lesssim_\epsilon \ell^{1/2} (1+m)^{-1/3}
\end{equation}
for all $(\ell,m) \in I$ with $m \leq \epsilon \ell$ and $x \in [0,1]$.
Actually, we need only consider the case where
$b_{\ell,m}/2 \leq y \leq b_{\ell,m}(1+\delta m^{-2/3})$ for some $\delta>0$, for otherwise \eqref{eq:est_boyddunster_unif} easily follows from \eqref{eq:boyddunster_bound_X_bis}.
In this case, $y \simeq m/\ell$, and therefore $|Y_{\ell,m}(x)| \lesssim \ell^{1/2}$ by \eqref{eq:universal-bound-Y-2}; 
hence, in proving \eqref{eq:est_boyddunster_unif}, we need only consider $m \geq m_0$  for some $m_0 > 0$.

Now, as discussed in \cite[\S 3]{BoydDunster}, the identity
\[
E^{-1}_m M_m(z) = \sqrt{2} J_m(z)
\]
holds for all $z \in [0,X_m]$, where $X_m$ is a positive real number defined in \cite[eq.\ (3.4)]{BoydDunster} and satisfying
\begin{equation}\label{eq:lb_Xm_1}
X_m \geq m
\end{equation}
for all $m \geq 0$ by \cite[Corollary 1 applied with $\theta = 3\pi/4$]{MuSp}, as well as
\[
X_m = m + 2 c m^{1/3} + \bigO(m^{-1/3})
\]
as $m \to \infty$, for some $c \in (0,1)$ \cite[Chapter 12, Ex.\ 1.1, p.\ 438]{Olver-libro}.
In particular
\begin{equation}\label{eq:lb_Xm_2}
X_m \geq m(1 + c m^{-2/3})
\end{equation}
for all $m \geq m_0$, for a suitable $m_0 > 0$. Moreover, \eqref{eq:boyddunster_Xapprox} implies that
\begin{equation}\label{eq:boyddunster_bound_X_better}
 |y^2-b_{\ell,m}^2|^{1/4} \, |Y_{\ell,m}(x)| \\
\lesssim_\epsilon |\ell^2 \zeta_{\ell,m}(x)-m^2|^{1/4}
|J_m(\ell \,\zeta_{\ell,m}(x)^{1/2}) |
\end{equation}
uniformly for all $(\ell,m) \in I$ with $m \leq \epsilon \ell$ and $x \in [0,1]$ satisfying $\ell \, \zeta_{\ell,m}(x)^{1/2} \leq X_m$.

We now recall from \cite[eq.\ (3.24)]{CaCiaMa} the inequality
\begin{equation}\label{eq:boyddunster_claim}
\zeta_{\ell,m}(x)^{1/2} \leq y
\end{equation}
for all $x \in [a_{\ell,m},1]$. 
Further, we claim that
\begin{equation}\label{eq:boyddunster_claim_2}
\frac{\zeta_{\ell,m}(x) - b_{\ell,m}^2}{y^2-b_{\ell,m}^2} \simeq_\epsilon 1
\end{equation}
for all $x \in [0,1]$ with $b_{\ell,m}/2 \leq y \leq \epsilon^{-1/2} b_{\ell,m}$.

Assuming the claim, from \eqref{eq:boyddunster_claim_2} we deduce that,
for all $(\ell,m) \in I$ and $x \in [0,1]$, if $m \leq \epsilon \ell$ and $b_{\ell,m}/2 \leq y \leq b_{\ell,m}(1+\delta m^{-2/3})$ for some $\delta \in (0,1)$, then
\[
\zeta_{\ell,m}(x) \leq b_{\ell,m}^2(1 + c_\epsilon \delta m^{-2/3}),
\]
whence, by \eqref{eq:lb_Xm_2},
\[
\ell \zeta_{\ell,m}(x)^{1/2} \leq m (1 + c_\epsilon \delta m^{-2/3}) \leq X_m
\]
provided $\delta$ is chosen sufficiently small and $m \geq m_0$ for some sufficiently large $m_0$. Therefore, from \eqref{eq:boyddunster_claim_2} and \eqref{eq:boyddunster_bound_X_better} and Lemma \ref{lem:besselestimate} we deduce that
\[
 |Y_{\ell,m}(x) | \\
\lesssim_\epsilon \ell^{1/2}
m^{-1/3}
\]
for all $(\ell,m) \in I$ and $x \in [0,1]$ satisfying $m_0 \leq m \leq \epsilon \ell$ and $b_{\ell,m}/2 \leq y \leq b_{\ell,m}(1+\delta m^{-2/3})$. This completes the proof of \eqref{eq:est_boyddunster_unif}.
 
 We are left with the proof of the claim 
\eqref{eq:boyddunster_claim_2}.
Assume first that $b_{\ell,m} \leq y \leq \epsilon^{-1/2} b_{\ell,m}$. Then, by \eqref{eq:boyddunster_claim}, $b_{\ell,m} \leq \zeta_{\ell,m}^{1/2}(x) \leq \epsilon^{-1/2} b_{\ell,m}$ as well, and moreover $\sqrt{1-\epsilon^{1/2}} \leq x \leq a_{\ell,m} \leq 1$ (here we use that $b_{\ell,m} \leq \epsilon$). Consequently, from \eqref{eq:def_BDzeta_upper} we deduce that
\begin{equation}\label{eq:claim_2_proof_1}
\int_{b_{\ell,m}^2}^{\zeta_{\ell,m}(x)} (\xi - b_{\ell,m}^2)^{1/2} \,d\xi \simeq_\epsilon \int^{a_{\ell,m}}_x (a_{\ell,m}^2 - s^2)^{1/2} \,ds \simeq_\epsilon \int^{a_{\ell,m}^2}_{x^2} (a_{\ell,m}^2 - t)^{1/2} \,dt,
\end{equation}
that is,
\begin{equation}\label{eq:claim_2_proof_2}
(\zeta_{\ell,m}(x) - b_{\ell,m}^2)^{3/2} \simeq_\epsilon (a_{\ell,m}^2 - x^2)^{3/2} = (y^2-b_{\ell,m}^2)^{3/2},
\end{equation}
which gives \eqref{eq:boyddunster_claim_2} in this case. In the case where $b_{\ell,m}/2 \leq y \leq b_{\ell,m}$, instead, by \eqref{eq:def_BDzeta_lower} we first deduce that
\begin{multline*}
\frac{b_{\ell,m}}{2\sqrt{2}} \log_+\left(\frac{b_{\ell,m}^2}{2\zeta_{\ell,m}(x)}\right) \leq \int_{\min\{\zeta_{\ell,m}(x),b_{\ell,m}^2/2\}}^{b_{\ell,m}^2/2} \frac{(b_{\ell,m}^2-\xi)^{1/2}}{2\xi} \,d\xi \\
\leq \frac{4}{b_{\ell,m}^2} \int_{a_{\ell,m}}^x (s^2-a_{\ell,m}^2)^{1/2} \,ds
\simeq_\epsilon b_{\ell,m}^{-2} (x^2-a_{\ell,m}^2)^{3/2} \lesssim b_{\ell,m}
\end{multline*}
(here we used that $1 \geq x \geq a_{\ell,m} \geq \sqrt{1-\epsilon^2}$),
whence
\[
c_\epsilon b_{\ell,m} \leq \zeta_{\ell,m}(x)^{1/2} \leq b_{\ell,m}
\]
for some $c_\epsilon \in (0,1)$. Now the analogues of \eqref{eq:claim_2_proof_1} and \eqref{eq:claim_2_proof_2} can be derived by using \eqref{eq:def_BDzeta_lower} in place of \eqref{eq:def_BDzeta_upper}, giving \eqref{eq:boyddunster_claim_2} in this case as well.
\end{proof}

Propositions \ref{prp:est_boyddunsterEXP} and \ref{prp:est_boyddunsterV} immediately
yield the second part of Theorem \ref{thm:main}.

\begin{corollary}\label{thm:main_1}
Let $\DimD \in \NN$, $\DimD \geq 2$,
and  $\epsilon \in (0,1)$.
For all $(\ell,m) \in I_\DimD$, if $m \leq \epsilon \ell$, then
\begin{equation}\label{eq:main_boyddunster_1}
|\tX^\DimD_{\ell,m}(x)|
\lesssim_{\epsilon,\DimD} \begin{cases}
y^{-(\DimD-2)/2} \left( \frac{(1+m)^{4/3}}{\ell^{2}} + |y^2-b_{\ell,m}^2|\right)^{-1/4} & \text{for all $x \in[-1,1]$,}\\
2^{-m}\, \ell^{(\DimD-1)/2}  &\text{if $y \leq b_{\ell,m}/2e$,}
\end{cases}
\end{equation}
where $y = \sqrt{1-x^2}$.
\end{corollary}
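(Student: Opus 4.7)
The plan is to reduce the corollary to Propositions \ref{prp:est_boyddunsterEXP} and \ref{prp:est_boyddunsterV} via the identity $\tX^\DimD_{\ell,m}(x) = y^{-(\DimD-2)/2} Y_{\ell,m}(x)$ furnished by \eqref{eq:rel-Y-X}, where $y = \sqrt{1-x^2}$. The first inequality in \eqref{eq:main_boyddunster_1} would then follow immediately: multiplying the estimate \eqref{eq:est_boyddunster} of Proposition \ref{prp:est_boyddunsterV} by $y^{-(\DimD-2)/2}$ yields exactly the desired bound, with no further work required.

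For the second inequality, I would invoke Proposition \ref{prp:est_boyddunsterEXP} to write
\[
|\tX^\DimD_{\ell,m}(x)| \lesssim_\epsilon y^{-(\DimD-2)/2} b_{\ell,m}^{-(m+1/2)} (ye)^m = e^m \, b_{\ell,m}^{-(m+1/2)} \, y^{m-(\DimD-2)/2}.
\]
The key observation is that the constraint $m \in \NN_{\DimD-1}$ forces $m \geq (\DimD-2)/2$, so that the exponent $m - (\DimD-2)/2$ is nonnegative. Consequently, in the region $y \leq b_{\ell,m}/(2e)$, I would estimate $y^{m-(\DimD-2)/2} \leq (b_{\ell,m}/(2e))^{m-(\DimD-2)/2}$. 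Substituting and tracking the powers, the factors of $b_{\ell,m}$ collect to $b_{\ell,m}^{-(\DimD-1)/2}$, while the factors of $e$ and $2$ combine into $2^{-m}$ up to a $\DimD$-dependent constant, giving
\[
|\tX^\DimD_{\ell,m}(x)| \lesssim_\DimD 2^{-m} \, b_{\ell,m}^{-(\DimD-1)/2} = 2^{-m} (\ell/m)^{(\DimD-1)/2}.
\]

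To finish, for $\DimD \geq 3$ the lower bound $m \geq (\DimD-2)/2 \geq 1/2$ yields $(\ell/m)^{(\DimD-1)/2} \lesssim_\DimD \ell^{(\DimD-1)/2}$, as required. The only edge case is $\DimD=2$ with $m=0$, in which case the condition $y \leq b_{\ell,m}/(2e) = 0$ collapses to $y=0$, i.e.\ $x=\pm 1$; here I would simply apply the first inequality of \eqref{eq:main_boyddunster_1}, which specialises at $y=0$ and $m=0$ to $|\tX^2_{\ell, 0}(\pm 1)| \lesssim \ell^{1/2}$, matching the required bound.

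The main obstacle I expect is handling the interplay between the singular factor $y^{-(\DimD-2)/2}$ and the vanishing factor $(ye)^m$ near $y=0$: the estimate would genuinely fail in higher dimension without the structural constraint $m \in \NN_{\DimD-1}$ imposing $m \geq (\DimD-2)/2$, which is precisely what guarantees that the surviving exponent of $y$ is nonnegative and thus prevents any blow-up. Once this observation is made, the remainder is routine algebraic bookkeeping.
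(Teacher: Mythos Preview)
Your proposal is correct and follows essentially the same route as the paper: both arguments derive the first bound directly from \eqref{eq:rel-Y-X} and Proposition~\ref{prp:est_boyddunsterV}, and the second from Proposition~\ref{prp:est_boyddunsterEXP} via the key observation that $m \in \NN_{\DimD-1}$ forces $m \geq (\DimD-2)/2$, making the exponent of $y$ nonnegative. Your explicit treatment of the edge case $\DimD=2$, $m=0$ (where the condition $y \leq b_{\ell,m}/(2e)$ degenerates to $y=0$) is slightly more careful than the paper's terse computation, but the substance is the same.
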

\begin{proof}
 The first inequality is an immediate consequence of  \eqref{eq:rel-Y-X} and 
 \eqref{eq:est_boyddunster}. 
Moreover, if $m \in \NN_{\DimD-1}$ and $y \leq b_{\ell,m}/2e$, then
\[
\big|\tX_{\ell,m}^\DimD(x) \big|
\lesssim_\epsilon 
\big(b_{\ell,m}/2e \big)^{m-(\DimD-2)/2} b_{\ell,m}^{-m-1/2} e^m
\lesssim_d 
 2^{-m} \ell^{(\DimD-1)/2},
\]
proving the second bound in \eqref{eq:main_boyddunster_1}.
\end{proof}

\section{The Hermite regime}\label{s:boundsHermite}

In this section we prove pointwise estimates   for both $Y_{\ell,m}$
and $\tX_{\ell,m}^\DimD$ as $m \geq \epsilon \ell$ for some $\epsilon \in (0,1)$.
In this range, we can apply a uniform asymptotic approximation of  $\LegF^{-m}_{\ell-1/2}$ for large $\ell$ in terms of Hermite functions 
previously proved by Olver \cite{Olver75,Olver}.
Indeed, the same argument used in the proof of \cite[Proposition 3.3]{CaCiaMa}, which is based on Olver's approximation, as well as standard estimates for Hermite functions \cite{AW,Th} and the uniform estimate for Jacobi polynomials of Haagerup and Schlichtkrull \cite{Haagerup}, can be applied to prove the following estimate.

\begin{proposition}\label{prp:est_olver_Legendre}
Let $\epsilon \in (0,1)$. 
Then
for all $(\ell,m) \in I$ with $m \geq \epsilon \ell$ and
for all $x\in[-1,1]$
\begin{equation}\label{eq:est_olver}
 \big| Y_{\ell,m}(x)\big|
 \lesssim_\epsilon \big(\ell^{-1}+  |x^2-a_{\ell,m}^2|\big)^{-1/4} .
\end{equation}
\end{proposition}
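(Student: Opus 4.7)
The plan is to follow the strategy of \cite[Proposition 3.3]{CaCiaMa} and base the proof on Olver's uniform asymptotic approximation of $\LegF^{-m}_{\ell-1/2}$ in terms of Hermite functions \cite{Olver75,Olver}, which is effective precisely in the regime $m \simeq_\epsilon \ell$. By the symmetry $P^{(m,m)}_n(-x) = (-1)^n P^{(m,m)}_n(x)$, we may assume $x \in [0,1]$.

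Olver's expansion takes the schematic form
\[
Y_{\ell,m}(x) = \varkappa_{\ell,m} \, \Phi_{\ell,m}(x) \, \bigl[ h_{\ell-m-1/2}(\mu_\ell\,\zeta_{\ell,m}(x)) + R_{\ell,m}(x) \bigr],
\]
where $h_n$ is the $L^2$-normalized Hermite function of degree $n$, $\mu_\ell \simeq \ell^{1/2}$, $\zeta_{\ell,m}$ is an increasing real-analytic mapping carrying $a_{\ell,m}$ to the scaled turning point of $h_{\ell-m-1/2}$, $\varkappa_{\ell,m} \simeq_\epsilon 1$, $\Phi_{\ell,m}(x) \lesssim_\epsilon \ell^{1/4}$ encodes the Jacobian of the change of variables, and $R_{\ell,m}$ is an $\bigO(\ell^{-1})$ remainder. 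From the implicit equation defining $\zeta_{\ell,m}$ one extracts the key comparability $\zeta_{\ell,m}(x)^2 - \zeta_{\ell,m}(a_{\ell,m})^2 \simeq_\epsilon x^2-a_{\ell,m}^2$ uniformly on $[0,1-\delta]$. Pairing this with the standard Airy-type pointwise estimate
\[
|h_n(y)| \lesssim \bigl((2n+1)^{1/3} + |y^2-(2n+1)|\bigr)^{-1/4}
\]
for Hermite functions \cite{AW,Th}, and substituting $y = \mu_\ell\zeta_{\ell,m}(x)$ and $n = \ell-m-1/2$, yields
\[
|h_{\ell-m-1/2}(\mu_\ell\zeta_{\ell,m}(x))| \lesssim_\epsilon \ell^{-1/4}\bigl(\ell^{-1} + |x^2-a_{\ell,m}^2|\bigr)^{-1/4},
\]
which, multiplied by the envelope $\Phi_{\ell,m}(x) \lesssim_\epsilon \ell^{1/4}$, delivers the main term of \eqref{eq:est_olver}.

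Two loose ends remain: the remainder $R_{\ell,m}$ and the boundary region near $x=1$ where the mapping $\zeta_{\ell,m}$ degenerates. For the first, the uniform Haagerup--Schlichtkrull estimate for Jacobi polynomials \cite{Haagerup}, together with the normalization constant $c_{\ell,m}$, provides a universal bound $|Y_{\ell,m}(x)| \lesssim_\epsilon \ell^{1/4}$ valid throughout $[0,1]$, so that the $\bigO(\ell^{-1})$ decay in $R_{\ell,m}$ produces a strictly subdominant contribution compared to the envelope. For the boundary strip one has $|x^2-a_{\ell,m}^2| \gtrsim_\epsilon 1$, and the universal bound \eqref{eq:universal-bound-Y-2} already gives \eqref{eq:est_olver}. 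The main obstacle, and the point requiring the most care, is the transition zone $|x^2-a_{\ell,m}^2| \lesssim \ell^{-2/3}$: there the Airy-scale estimate $|h_n(y)| \lesssim n^{-1/12}$ must be matched with $\Phi_{\ell,m}(x) \simeq \ell^{1/4}$ to produce exactly the envelope value $\ell^{1/4} = (\ell^{-1})^{-1/4}$, and this in turn requires uniform derivative control on $\zeta_{\ell,m}$ at the turning point (that is, a quantitative understanding of the inverse-function behaviour of $\zeta_{\ell,m}$ near $a_{\ell,m}$) that is implicit in Olver's construction but must be transcribed into the present notation.
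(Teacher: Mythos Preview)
Your approach coincides with the paper's: the paper gives no independent argument and simply refers to \cite[Proposition~3.3]{CaCiaMa}, naming exactly the three ingredients you use (Olver's Hermite-type asymptotics \cite{Olver75,Olver}, the Askey--Wainger/Thangavelu pointwise bounds for Hermite functions \cite{AW,Th}, and the Haagerup--Schlichtkrull uniform Jacobi estimate \cite{Haagerup}).

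One small correction to your sketch: the appeal to \eqref{eq:universal-bound-Y-2} for the boundary strip near $x=1$ does not work, since that bound only yields $|Y_{\ell,m}(x)| \lesssim \ell^{1/4}$, whereas in that strip the right-hand side of \eqref{eq:est_olver} is $\lesssim_\epsilon 1$. This is not a real obstacle, though: Olver's approximation is uniform on all of $[0,1)$, and beyond the turning point the super-exponential decay of $h_{\ell-m-1/2}$ (together with the factor $(1-x^2)^{m/2}$ in the definition of $Y_{\ell,m}$, with $m \geq \epsilon\ell$) already forces $|Y_{\ell,m}(x)|$ to be much smaller than $1$ there.
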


By combining this estimate with ODE techniques we can obtain a stronger decay estimate in the region where $|x| \gg a_{\ell,m}$.	

\begin{proposition}\label{prp:titchmarsh_agmon}
For all $K \in (1,\infty)$ there exists $c \in (0,1)$ such that, 
for all $\epsilon \in (0,1)$ and $m_0 \in \NN/2$,
if $(\ell,m) \in I$ is such that $m \geq \max\{\epsilon \ell,m_0\}$, then 
 \begin{equation}\label{eq:m0_c}
|Y_{\ell,m}(x)| \lesssim_{\epsilon,m_0,K} |x|^{-1/2} (1-x^2)^{\max\{c\epsilon\ell,m_0\}/2}
 \end{equation}
whenever $x \in (-1,1)$ and $|x| \geq K a_{\ell,m}$.
\end{proposition}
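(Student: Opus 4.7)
The plan is to exploit the fact that $f = Y_{\ell,m}$ solves a Sturm--Liouville equation whose potential is positive in the ``classically forbidden'' region $\{|x| > a_{\ell,m}\}$, and to run a maximum principle / comparison argument of Titchmarsh--Agmon type against a judicious super-solution.

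By \eqref{def:Ylm}, $f$ satisfies the associated Legendre equation $-[(1-x^2)f']' + V(x) f = 0$ with potential $V(x) = m^2/(1-x^2) - (\ell^2 - 1/4)$, and a direct check gives $V(x) \geq 1/4$ on $\{|x| > a_{\ell,m}\}$. This positivity is the key structural input. Using the parity of $f$, I reduce to $x > 0$. Setting $s = \max\{c\epsilon\ell, m_0\}$ with $c \in (0,1)$ to be chosen depending only on $K$, I take as comparison function
\[
h(x) = x^{-1/2}(1-x^2)^{s/2}.
\]
A direct calculation yields
\[
\frac{-[(1-x^2) h'(x)]' + V(x) h(x)}{h(x)} = \frac{m^2 - s^2 x^2}{1-x^2} - \ell^2 - \frac{3}{4x^2},
\]
and after rearrangement the inequality $(m^2 - s^2 x^2)/(1-x^2) \geq \ell^2$ reduces, up to a perturbation, to $(1 - s^2/\ell^2) x^2 \geq a_{\ell,m}^2$; the hypothesis $x \geq K a_{\ell,m}$ combined with the choice $c^2 < 1 - 1/K^2$ ensures this with strict margin, which, if large enough, will absorb the residual $3/(4x^2)$. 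Thus $h$ becomes a super-solution of $\mathcal{L} := -[(1-x^2)\cdot']' + V$ on $[K a_{\ell,m}, 1)$.

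Once the super-solution property is established, the weak maximum principle for $\mathcal{L}$ (applicable since $V > 0$) yields $|f| \leq \alpha h$ on $[K a_{\ell,m}, 1)$, where $\alpha$ is the supremum of $|f|/h$ on the boundary. At $x \to 1^-$, $f(x) \sim (1-x^2)^{m/2}$ decays strictly faster than $h(x) \sim (1-x^2)^{s/2}$ (since $s < m$), so the outer endpoint contributes nothing to $\alpha$. At the inner endpoint $x = K a_{\ell,m}$, I invoke Proposition~\ref{prp:est_olver_Legendre} to bound $|f|$ by a constant multiple of $(\ell^{-1} + (K^2-1) a_{\ell,m}^2)^{-1/4}$, and compare this against $h(K a_{\ell,m}) = (K a_{\ell,m})^{-1/2}(1 - K^2 a_{\ell,m}^2)^{s/2}$ to conclude $\alpha \lesssim_{K, \epsilon, m_0} 1$.

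The main obstacle I expect is the careful verification of the super-solution inequality together with the uniform control of $\alpha$ across the whole admissible range of $(\ell, m)$. The residual $3/(4x^2)$ is delicate when $\ell a_{\ell,m}^2 \lesssim 1$, which forces $\ell - m$ to be bounded and puts us in an extreme sub-regime where $Y_{\ell,m} = c_{\ell,m} (1-x^2)^{m/2} P^{(m,m)}_{\ell-m-1/2}(x)$ reduces to a fixed-degree Jacobi polynomial times $(1-x^2)^{m/2}$; there I will argue directly, using Stirling's formula for $c_{\ell,m}$ together with the explicit form of the low-degree $P^{(m,m)}_{\ell-m-1/2}$, which completes the proof.
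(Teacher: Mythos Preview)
Your approach is genuinely different from the paper's, and the core idea is sound. The paper transforms to $L=(1-x^2)^{1/2}Y_{\ell,m}$, which satisfies $L''=QL$ with $Q$ as in \eqref{eq:defQ}, proves $LL'<0$ beyond the turning point $\bar x_{\ell,m}$, and then applies Titchmarsh's inequality $|L(x)|\le |L(x_*)|\exp\bigl(-\int_{x_*}^x Q^{1/2}\bigr)$; a lower bound on the integral and Bernoulli's inequality convert the exponential into the desired power of $1-x^2$. You instead stay with the Sturm--Liouville form and compare directly against the explicit barrier $h(x)=x^{-1/2}(1-x^2)^{s/2}$, which is attractive because $h$ already has the shape of the target bound. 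Both routes start from Proposition~\ref{prp:est_olver_Legendre} at the inner endpoint.

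The main point where the two arguments diverge is the handling of the lower-order term $3/(4x^2)$. In the paper this term is built into $Q$ via $\bar x_{\ell,m}$, and since $\bar x_{\ell,m}/a_{\ell,m}\to 1$ as $\ell\to\infty$ \emph{uniformly in $m$}, the argument works for all $\ell\ge\ell(m_0,K)$ with no restriction on $\ell-m$; only finitely many pairs are excluded. In your approach, checking $\mathcal Lh\ge 0$ at $x=Ka_{\ell,m}$ reduces (after your computation) to $(\ell^2-m^2)^2/\ell^2\gtrsim_K 1$, i.e.\ $\ell-m\ge N_K$, where $N_K\to\infty$ as $K\downarrow 1$. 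So the super-solution step genuinely fails for small $\ell-m$, and you must treat infinitely many pairs $(\ell,m)$ separately.

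Your proposed fix for that sub-regime is feasible but less trivial than your last sentence suggests. The crude bound $|P^{(m,m)}_n(x)|\lesssim_n m^{n}$ is not enough: it yields $|Y_{\ell,m}(x)|\lesssim_n m^{n/2+1/4}(1-x^2)^{m/2}$, and maximising $|x|^{1/2}(1-x^2)^{(m-s)/2}$ gives only $m^{-1/4}$, leaving a factor $m^{n/2}$. One needs the sharper pointwise control $|P^{(m,m)}_n(x)|\lesssim_n m^{n}|x|^{n}$ on $\{|x|\ge Ka_{\ell,m}\}$ (equivalently, the fact that $P^{(m,m)}_n$ behaves like a rescaled Hermite polynomial there), which is obtainable from the explicit coefficient expansion but should be stated and proved. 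With that in hand your maximisation argument does close. So the plan works, but this step deserves to be spelled out rather than asserted.
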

\begin{proof}
Note that, if $m \leq 1$, then $\ell \lesssim_\epsilon 1$ and the desired estimate trivially follows from \eqref{eq:universal-bound-Y}. So in what follows we may assume $m>1$. For a similar reason, we may also assume that $\ell \geq \ell(m_0,K)$ for some large $\ell(m_0,K)$ to be specified later. Further, due to parity, we need only prove the estimate for $x\geq 0$.

Recall (see, e.g., \cite[eq.\ (2.1)]{Olver}) that the function $L(x) = (1-x^2)^{1/2} Y_{\ell,m}(x)$ satisfies the ODE
\begin{equation}\label{eq:L''}
L''(x) = Q(x) L(x) 
\end{equation}
on the interval $(-1,1)$,
where
\begin{equation}\label{eq:defQ}
Q(x)= Q_{\ell,m}(x) = \frac{\ell^2(x^2-a_{\ell,m}^2)-(3+x^2)/4}{(1-x^2)^2} 
= (\ell^2-1/4) \frac{x^2-\bar x_{\ell,m}^2}{(1-x^2)^2},
\end{equation}
with $a_{\ell,m}$
 defined as in \eqref{eq:def-aldm}, and 
\begin{equation}\label{eq:xlm}
\bar x_{\ell,m} = \sqrt{\frac{\ell^2-m^2+3/4}{\ell^2-1/4}}
 \in [a_{\ell,m},8a_{\ell,m}]
\end{equation}
for all $(\ell,m) \in I$.
Note that $\bar x_{\ell,m} < 1$ (since $m > 1$), and $Q(x) > 0$ whenever $|x| > \bar x_{\ell,m}$. In addition, since $m>1$, from \eqref{def:Ylm} we deduce that
\begin{equation}\label{eq:Llimits}
\lim_{x \to 1} L(x) = \lim_{x \to 1} L'(x) = 0.
\end{equation}

We now claim that $L(x) L'(x) < 0$ for all $x > \bar x_{\ell,m}$. Indeed, $L(x)$ and $L'(x)$ cannot vanish simultaneously, because $L$ is a nontrivial solution of a second order linear ODE.
Moreover, by \eqref{eq:Llimits},
$L(x) L'(x)$ cannot be positive for any $x > \bar x_{\ell,m}$ (otherwise by \eqref{eq:L''} 
the function $L$ would be positive and increasing, or negative and decreasing, on the interval $(x,1)$, and would not tend to zero).
Finally one cannot have $L(x) L'(x) = 0$ for any $x > \bar x_{\ell,m}$ (because for any larger $x$ one would find the situation that we have just ruled out).
 
 Note also that $Q$ is strictly increasing for $x \geq 0$.  We can then apply the argument in \cite[\S 8.2]{Titchmarsh} and conclude that, for $x > x_* > \bar x_{\ell,m}$,
\begin{equation}\label{ineq:Titchmarsch}
|L(x)| \leq |L(x_*)| \exp \left( -\int_{x_*}^x Q(u)^{1/2} \,du \right). 
\end{equation}
From \eqref{eq:defQ} we deduce that, if $x^2 \geq (1-\eta^2)^{-1} \bar x_{\ell,m}^2$ for some $\eta \in (0,1)$, then
\[
Q(x)^{1/2} \geq \eta \sqrt{\ell^2-1/4} \frac{x}{1-x^2},
\]
and consequently, for 
$x > x_* \geq (1-\eta^2)^{-1/2} \bar x_{\ell,m}$,
\[
\int_{x_*}^x Q(u)^{1/2} \,du \geq \frac{\eta}{2} \sqrt{\ell^2-1/4} \int_{x_*^2}^{x^2} \frac{du}{1-u} = \frac{\eta}{2} \sqrt{\ell^2-1/4} \log \frac{1-x_*^2}{1-x^2}.
\]
Hence
\eqref{ineq:Titchmarsch} yields
\[
|Y_{\ell,m}(x)| \leq |Y_{\ell,m}(x_*)| \left( \frac{1-x^2}{1-x_*^2} \right)^{(\eta\sqrt{\ell^2-1/4}-1)/2}.
\]
Note that, if we take $x^2 \geq (1-\delta)^{-1} x_*^2$ for some $\delta \in (0,1)$, then $1-x_*^2 \geq 1-(1-\delta)x^2 \geq (1-x^2)^{1-\delta}$, by Bernoulli's inequality, whence
\begin{equation}\label{eq:est_titchmarsh}
|Y_{\ell,m}(x)| \leq |Y_{\ell,m}(x_*)| \, (1-x^2)^{\delta(\eta\sqrt{\ell^2-1/4}-1)/2}.
\end{equation}

Finally, let us remark that $\ell^2-m^2 \geq (\ell+m)/2$ for all $(\ell,m)\in I$. Consequently, by \eqref{eq:xlm}, $\bar x_{\ell,m}/a_{\ell,m} \to 1$ as $\ell \to \infty$ uniformly in $m$, so there exists $\ell_{K,\eta} \in \NN/2$ such that 
\begin{equation}\label{eq:large_conditions}
\bar x_{\ell,m}/a_{\ell,m} \in [1,K^{1/3}], \qquad \eta\sqrt{\ell^2-1/4} -1 \geq \eta \ell/2.
\end{equation}
for all $(\ell,m) \in I$ with $\ell \geq \ell_{K,\eta}$.
Moreover
\begin{equation}\label{eq:alm_ell}
a_{\ell,m}^2 \geq 1/(2\ell)
\end{equation}
for all $(\ell,m) \in I$, and therefore, for any $\alpha>0$,
\begin{equation}\label{eq:swallow}
|x|/a_{\ell,m} \lesssim \ell^{1/2} |x| \lesssim_\alpha \exp(\alpha \ell x^2) \leq (1-x^2)^{-\alpha\ell}.
\end{equation}

Now, since $m \geq \epsilon\ell$, if we take $x_* = (1-\eta^2)^{-1/2} \bar x_{\ell,m}$, then $x_* \geq (1-\eta^2)^{-1/2} a_{\ell,m}$ and
\begin{equation}\label{eq:est_olver_x*}
|Y_{\ell,m}(x_*)| \lesssim_{\epsilon,\eta}  a_{\ell,m}^{-1/2}
\end{equation}
by \eqref{eq:est_olver}. 
Hence, by \eqref{eq:est_titchmarsh}, \eqref{eq:est_olver_x*} and \eqref{eq:swallow},
if $x^2 \geq (1-\delta)^{-1} (1-\eta^2)^{-1} \bar x_{\ell,m}^2$, then
\[\begin{split}
|Y_{\ell,m}(x)| &\lesssim_{\epsilon,\eta} a_{\ell,m}^{-1/2} \, (1-x^2)^{\delta(\eta\sqrt{\ell^2-1/4}-1)/2} \\
 &\lesssim_{\alpha} |x|^{-1/2} (1-x^2)^{\delta(\eta\sqrt{\ell^2-1/4}-1-\alpha\ell)/2}.
\end{split}\]
As a consequence, by \eqref{eq:large_conditions}, if we take $\delta$ and $\eta$ so that $1-\delta=1-\eta^2=K^{-2/3}$, $\alpha = \eta/4$ and $c=\delta\eta/4$, then
\[
|Y_{\ell,m}(x)|  \lesssim_{\epsilon,K} |x|^{-1/2} (1-x^2)^{c\ell/2}.
\]
whenever $x \geq Ka_{\ell,m}$, $m \geq \epsilon \ell$ and $\ell \geq \ell_{K,\eta}$. This proves the desired estimate \eqref{eq:m0_c} for all $\ell \geq \ell(m_0,K)=\max\{\ell_{K,\eta},m_0/c\}$.
\end{proof}

The first part of Theorem \ref{thm:main}
is a consequence of 
 the following result.

\begin{corollary}\label{thm:est_olver_boyddunster}
Let $\DimD \in \NN$, $\DimD \geq 2$.
For all $K \in (1,\infty)$, there exists $c \in (0,1)$ such that, for all $\epsilon \in (0,1)$, for all $(\ell,m) \in I_\DimD$, if $m \geq \epsilon \ell$ then
\begin{equation}\label{eq:est_olver_xx}
|\tX_{\ell,m}^\DimD(x)| \lesssim_{\epsilon,K,\DimD}
\begin{cases}
(\ell^{-1} + |x^2-a_{\ell,m}^2|)^{-1/4} &\text{for all } x \in [-1,1], \\
|x|^{-1/2} (1-x^2)^{(c\epsilon\ell-(d-2)/2)_+/2} &\text{if } |x| \geq K \, a_{\ell,m}.
\end{cases}
\end{equation}
\end{corollary}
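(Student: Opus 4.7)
The plan is to deduce the corollary from Propositions \ref{prp:est_olver_Legendre} and \ref{prp:titchmarsh_agmon} via the identity $\tX_{\ell,m}^\DimD = (1-x^2)^{-(\DimD-2)/4} Y_{\ell,m}$ from \eqref{eq:rel-Y-X}. The key observation enabling this transfer is that $(\ell,m) \in I_\DimD$ forces $m \in \NN_{\DimD-1}$, hence $m \geq (\DimD-2)/2$; I may therefore invoke Proposition \ref{prp:titchmarsh_agmon} with $m_0 = (\DimD-2)/2$, and the resulting exponent $\max\{c\epsilon\ell,(\DimD-2)/2\}/2$ of $(1-x^2)$ will compensate the divergent factor $(1-x^2)^{-(\DimD-2)/4}$ introduced by \eqref{eq:rel-Y-X}.

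For the second bound in \eqref{eq:est_olver_xx}, I will apply Proposition \ref{prp:titchmarsh_agmon} directly with the given $K$ and $m_0=(\DimD-2)/2$; the identity $\max\{A,B\}-B=(A-B)_+$ then produces the exponent $(c\epsilon\ell-(\DimD-2)/2)_+/2$ after multiplication by $(1-x^2)^{-(\DimD-2)/4}$, exactly as required.

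For the first bound, which must hold for every $x \in [-1,1]$, I will split $[-1,1]$ at $T_\epsilon := \sqrt{1-\epsilon^2/2}$. On the inner piece $\{|x|\le T_\epsilon\}$, the factor $(1-x^2)^{-(\DimD-2)/4}$ is controlled by $(\epsilon^2/2)^{-(\DimD-2)/4}$, and the bound follows immediately from Proposition \ref{prp:est_olver_Legendre}. On the outer piece $\{|x|>T_\epsilon\}$, the hypothesis $m\geq\epsilon\ell$ yields $a_{\ell,m}\leq\sqrt{1-\epsilon^2}$, so $|x|/a_{\ell,m}$ exceeds the $\epsilon$-dependent ratio $T_\epsilon/\sqrt{1-\epsilon^2}>1$; picking $\tilde K_\epsilon$ strictly between $1$ and this ratio and applying Proposition \ref{prp:titchmarsh_agmon} with $\tilde K_\epsilon$ and $m_0=(\DimD-2)/2$ (the implicit constant now depending on $\epsilon$ and $\DimD$, which is permitted) gives $|\tX_{\ell,m}^\DimD(x)|\lesssim_{\epsilon,\DimD} |x|^{-1/2}\lesssim_\epsilon 1$. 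Since on this piece $x^2-a_{\ell,m}^2\geq\epsilon^2/2$ and $\ell^{-1}\lesssim_\DimD 1$, one also has $(\ell^{-1}+|x^2-a_{\ell,m}^2|)^{-1/4}\gtrsim_{\epsilon,\DimD} 1$, so the first bound persists.

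The hardest aspect will be the tension between the loss $(1-x^2)^{-(\DimD-2)/4}$ arising from \eqref{eq:rel-Y-X} and the control available for $Y_{\ell,m}$ near $|x|=1$; I will manage this through the dual use of $m_0=(\DimD-2)/2$ in Proposition \ref{prp:titchmarsh_agmon}, which keeps the net exponent of $(1-x^2)$ non-negative, and by picking an $\epsilon$-dependent cutoff $\tilde K_\epsilon>1$ on the outer piece of the first bound so that the tail regime of Proposition \ref{prp:titchmarsh_agmon} is always accessible.
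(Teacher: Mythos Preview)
Your proof is correct and follows essentially the same approach as the paper's: both derive the second bound directly from Proposition~\ref{prp:titchmarsh_agmon} with $m_0=(\DimD-2)/2$, and for the first bound both split $[-1,1]$ at an $\epsilon$-dependent threshold, using Proposition~\ref{prp:est_olver_Legendre} on the inner piece (where $(1-x^2)^{-(\DimD-2)/4}\lesssim_{\epsilon,\DimD}1$) and a second application of Proposition~\ref{prp:titchmarsh_agmon} with an $\epsilon$-dependent $K$ on the outer piece. The only differences are cosmetic choices of threshold ($\sqrt{1-\epsilon^2/2}$ versus the paper's $(1-\epsilon^2)^{1/4}$) and of the auxiliary $K$ in the outer region.
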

\begin{proof}
In light of \eqref{eq:rel-Y-X}, the second estimate in \eqref{eq:est_olver_xx} immediately follows from Proposition \ref{prp:titchmarsh_agmon} applied with $m_0=(d-2)/2$. Let now $\bar\epsilon=(1-\epsilon^2)^{1/2}$ and note that $a_{\ell,m} \leq \bar\epsilon$ whenever $m \geq \epsilon\ell$. By Proposition \ref{prp:titchmarsh_agmon} applied with $\bar\epsilon^{-1/2}$ in place of $K$, we also deduce that
\[
|\tX_{\ell,m}(x)| \lesssim_{\epsilon,d} |x|^{-1/2} \lesssim_{\epsilon} a_{\ell,m}^{-1/2}
\]
whenever $|x| \geq \bar\epsilon^{-1/2} a_{\ell,m}$, and in particular whenever $|x| \geq \bar\epsilon^{1/2}$. In view of \eqref{eq:alm_ell}, this proves the first estimate in \eqref{eq:est_olver_xx} whenever $|x| \geq \bar\epsilon^{1/2}$. Since $\bar\epsilon \in (0,1)$, the same estimate for $|x| \leq \bar\epsilon^{1/2}$ immediately follows from Proposition \ref{prp:est_olver_Legendre} and \eqref{eq:rel-Y-X}.
 \end{proof}

%%%%%%%%%%%%%%%%%%%%%%%%%%%%%%%%%%%%%%%%%%%%%%%%%%%%%%%%%%%%%%%%%%


\begin{thebibliography}{Gr09333}



\bibitem[AsWa]{AW}
R. Askey and S. Wainger,
Mean convergence of expansions in Laguerre and Hermite series,
\emph{Amer. J. Math.}
\textbf{87} (1965),
695--708.


\bibitem[AxBR]{AxBR}
S. Axler, P. Bourdon, and W. Ramey,
\emph{Harmonic function theory},
Graduate Text in Mathematics, vol.\ 137, Springer-Verlag, New York, 2001.


\bibitem[BaRV]{BarceloRuizVega}
J. A. Barcel\'o, A. Ruiz and L. Vega,
Weighted estimates for the Helmholtz equation and some applications,
\emph{J. Funct. Anal.}
\textbf{150} (1997),
356--382.


\bibitem[BoyD]{BoydDunster}
W. G. C. Boyd and T. M. Dunster,
Uniform asymptotic solutions of a class of second-order linear differential equations having a turning point and a regular singularity, with an application to Legendre functions,
\emph{SIAM J. Math. Anal.}
\textbf{17} (1986),
422--450.


\bibitem[BDWZ]{BDWZ}
N. Burq, S. Dyatlov, R. Ward, and M. Zworski,
Weighted eigenfunction estimates with applications to compressed sensing,
\emph{SIAM J. Math. Anal.}
\textbf{44} (2012),
3481--3501.


\bibitem[CCM1]{CaCiaMa}
V. Casarino, P. Ciatti and A. Martini,
From refined estimates for  spherical harmonics to a sharp multiplier theorem on the Grushin sphere, 
\emph{Adv.  Math.} \textbf{350} (2019), 816--859. 


\bibitem[CCM2]{CaCiaMa2}
%{\Blue
V. Casarino, P. Ciatti and A. Martini,
Weighted spectral cluster bounds and a sharp multiplier theorem for ultraspherical Grushin operators,
preprint (2020).
%}

\bibitem[DM]{DM}
G. M. Dall'Ara and A. Martini,
A robust approach to sharp multiplier theorems for Grushin operators,
\emph{Trans. Amer. Math. Soc.}
(to appear),
\texttt{arXiv:1712.03065}.


\bibitem[DLMF]{DLMF}
Digital Library of Mathematical Functions, \texttt{https://dlmf.nist.gov}.






\bibitem[EMN]{EMN}
T. Erd\'elyi, A.P. Magnus and P. Nevai, 
Generalized Jacobi weights, Christoffel functions, and Jacobi polynomials, 
\emph{SIAM J. Math. Anal.} \textbf{25}, (1994), 602--614.


\bibitem[EMOT]{EMOT}
A. Erd\'elyi, W. Magnus, F. Oberhettinger, and F. G. Tricomi,
\emph{Higher Transcendental Functions. Vol. II},
Robert E. Krieger Publishing Co. Inc., Melbourne, Fla., 1981.


\bibitem[FSab]{Frank}
R. L. Frank and J. Sabin,
Spectral cluster bounds for orthonormal systems and oscillatory integral operators in Schatten spaces,
\emph{Adv. Math.} \textbf{317} (2017),  157--192.



\bibitem[HSc]{Haagerup}
U. Haagerup and H. Schlichtkrull,
Inequalities for Jacobi polynomials,
\emph{Ramanujan J.}
\textbf{33} (2014),
227--246.


\bibitem[HoM]{HoMa}
A. Horwich and A. Martini,
Almost everywhere convergence of Bochner--Riesz means on Heisenberg-type groups,
preprint (2019), \texttt{arXiv:1908.04049}.


\bibitem[KKT]{KKT}
 T. Koornwinder,
 A. Kostenko and G. Teschl,
 Jacobi polynomials, Bernstein-type inequalities and dispersion estimates for the discrete Laguerre operator,
\emph{Adv. Math.} \textbf{333} (2018), 796--821.




\bibitem[Kr]{Kra2}
I. Krasikov,
On approximation of ultraspherical polynomials in the oscillatory region
\emph{J. Approx. Theory} \textbf{222} (2017),  143--156.

\bibitem[La]{La} L. J. Landau, Bessel functions: monotonicity and bounds,
\emph{J. London Math. Soc. (2)} \textbf{61} (2000), 197--215.

\bibitem[Lo1]{Lo1}
G. Loh\"ofer, Inequalities for Legendre functions and Gegenbauer functions
\emph{J. Approx. Theory},
\textbf{64} (1991),
226-234.

\bibitem[Lo2]{Lo2}
G. Loh\"ofer,
Inequalities for the associated Legendre functions,
\emph{J. Approx. Theory}
\textbf{95} (1998),
178-193.


\bibitem[MuSp]{MuSp} M. E. Muldoon and R. Spigler,
Some remarks on zeros of cylinder functions,
\emph{SIAM J. Math. Anal.}\textbf{15} (1984), 1231--1233.




\bibitem[O1]{Olver75}
F. W. J. Olver,
Second order linear differential equations with two turning points,
\emph{Philos. Trans. Roy. Soc. London Ser. A}
\textbf{278} (1975),
137--174.


\bibitem[O2]{Olver-libro}
F. W. J. Olver,
\emph{Asymptotics and Special Functions},
Academic Press, New York, 1974.


\bibitem[O3]{Olver}
F. W. J. Olver,
Legendre functions with both parameters large,
\emph{Philos. Trans. Roy. Soc. London Ser. A}
\textbf{278} (1975),
175--185.



\bibitem[RWar]{Ward}
H. Rauhut and R. Ward,
Sparse recovery for spherical harmonic expansions,
\emph{SampTA 2011 Conference Proceedings},
2011.



\bibitem[SW]{Stein-Weiss}
E. M. Stein and G. L. Weiss,
\emph{Introduction to Fourier analysis on Euclidean spaces},
Princeton University Press, Princeton, NJ, 1971.


\bibitem[Sz]{Szego}
G. Szeg\H{o},
\emph{Orthogonal polynomials},
Amer. Math. Soc. Colloq. Publ., vol. 23,
American Mathematical Society, Providence, RI, 1974.



\bibitem[Th]{Th}
S. Thangavelu,
\emph{Lectures on Hermite and Laguerre Expansions},
Mathematical Notes, vol. 42,
Princeton University Press,
Princeton, NJ, 1993.


\bibitem[Ti]{Titchmarsh}
E. C. Titchmarsh,
\emph{Eigenfunction expansions associated with second-order differential equations. Part I},
Second Edition,
Clarendon Press, Oxford, 1962.


\bibitem[V]{Vilenkin}
N. Ja. Vilenkin,
\emph{Special Functions and the Theory of Group Representations}, 
Translations of Mathematical Monographs, vol. 22,
American Mathematical Society, Providence, RI, 1968.




\end{thebibliography}
\end{document}